\newtheorem{theorem}{Theorem}
\newtheorem{conjecture}[theorem]{Conjecture}
\newtheorem{claim}{Claim}
\newtheorem{lemma}[theorem]{Lemma}
\newtheorem{corollary}[theorem]{Corollary}
\theoremstyle{remark}
\newtheorem{remark}{Remark}
\begin{document}
\title{Almost isoperimetric subsets of the discrete cube}
\author{David Ellis\footnote{St John's College, Cambridge, UK.}}
\date{May 2010}
\maketitle

\begin{abstract}
We show that a set \(A \subset \{0,1\}^{n}\) with edge-boundary of size at most
\[|A| (\log_{2}(2^{n}/|A|) + \epsilon)\]
can be made into a subcube by at most \((2 \epsilon/\log_{2}(1/\epsilon))|A|\) additions and deletions, provided \(\epsilon\) is less than an absolute constant.

We deduce that if \(A \subset \{0,1\}^{n}\) has size \(2^{t}\) for some \(t \in \mathbb{N}\), and $A$ cannot be made into a subcube by fewer than \(\delta |A|\) additions and deletions, then the edge-boundary of $A$ has size at least
\[|A| \log_{2}(2^{n}/|A|) + |A| \delta \log_{2}(1/\delta) = 2^{t}(n-t+\delta \log_{2}(1/\delta)),\]
provided \(\delta\) is less than an absolute constant. This is sharp whenever \(\delta = 1/2^{j}\) for some \(j \in \{1,2,\ldots,t\}\).
\end{abstract}

\section{Introduction}
We work in the \(n\)-dimensional discrete cube \(\{0,1\}^{n}\), the set of all 0-1 vectors of length \(n\). This may be identified with \(\mathcal{P}([n])\), the set of all subsets of \([n] = \{1,2,\ldots,n\}\), by identifying a set \(x \subset [n]\) with its characteristic vector \(\chi_{x}\) in the usual way. A \(d\)-\emph{dimensional subcube} of \(\{0,1\}^{n}\) is a set of the form
\[\{x \in \{0,1\}^{n}:\ x_{i_{1}}=a_{1},x_{i_{2}}=a_{2},\ldots,x_{i_{n-d}}=a_{n-d}\},\]
where \(i_{1}<i_{2}< \ldots < i_{n-d}\) are coordinates, and \(a_{1},a_{2},\ldots\) and \(a_{n-d}\) are fixed elements of \(\{0,1\}\). The coordinates \(i_{1},i_{2},\ldots,i_{n-d}\) are called the {\em fixed} coordinates; the other coordinates are called the {\em moving} coordinates, and \(n-d\) is called the {\em codimension} of the subcube.

Consider the graph \(Q_{n}\) with vertex-set \(\{0,1\}^{n}\), where we join two 0-1 vectors if they differ in exactly one coordinate; this graph is called the \(n\)-{\em dimensional hypercube}. Given a set \(A \subset \{0,1\}^{n}\), the {\em edge-boundary} of \(A\) is defined to be the set of all edges of \(Q_{n}\) joining a point in \(A\) to a point not in \(A\). We write \(\partial A\) for the edge-boundary of \(A\).

For \(1 \leq k \leq 2^{n}\), let \(C_{n,k}\) be the first \(k\) elements of the {\em binary ordering} on \(\mathcal{P}([n])\), defined by
\[x < y \Leftrightarrow \max(x \Delta y) \in y.\]

The edge-isoperimetric inequality of Harper \cite{harper}, Lindsey \cite{lindsey}, Bernstein \cite{bernstein} and Hart \cite{hart} states that among all subsets of \(\{0,1\}^{n}\) of size \(k\), \(C_{n,k}\) has the smallest possible edge-boundary.

A slightly weaker form is as follows:

\begin{equation}
\label{eq:edgeiso}
|\partial A| \geq |A|\log_{2}(2^{n}/|A|)\quad \forall A \subset \{0,1\}^{n};
\end{equation}
equality holds if and only if \(A\) is a subcube. We call \(|\partial A|/|A|\) the {\em average out-degree} of \(A\); (\ref{eq:edgeiso}) says that the average out-degree of \(A\) is at least \(\log_{2}(2^{n}/|A|)\) (which is the average out-degree of a subcube of size \(|A|\), when \(|A|\) is a power of 2). Writing \(p = |A|/2^{n}\) for the measure of the set \(A\), we may rewrite (\ref{eq:edgeiso}) as:
\[|\partial A| \geq 2^{n}p\log_{2}(1/p)\quad \forall A \subset \{0,1\}^{n}.\]

Hence, if \(|A|=2^{n-1}\), \(|\partial A| \geq 2^{n-1}\), and equality holds only if \(A\) is a codimension-1 subcube, in which case the edge-boundary consists of all the edges in one direction.

It is natural to ask whether it is always possible to find a direction in which there are many boundary edges. For \(i \in [n]\), we write
\[A_{i}^{+} = \{x \setminus \{i\}: x \in A,\ i \in x\} \subset \mathcal{P}([n]\setminus \{i\}),\]
and
\[A_{i}^{-}=\{x \in A: i \notin x\} \subset \mathcal{P}([n]\setminus \{i\});\]
\(A_{i}^{+}\) and \(A_{i}^{-}\) are called the {\em upper} and {\em lower} \(i\)-{\em sections} of \(A\), respectively. We write
\[\partial_{i} A = |A_{i}^{+} \Delta A_{i}^{-}|\]
for the number of edges of the boundary of \(A\) in direction \(i\). The \emph{influence} of the coordinate \(i\) on the set \(A\) is defined to be
\[\beta_{i} = |A_{i}^{+} \Delta A_{i}^{-}|/2^{n-1},\]
i.e. the fraction of direction-\(i\) edges of \(Q_{n}\) which belong to \(\partial A\). This is simply the probability that if \(S \subset \mathcal{P}([n])\) is chosen uniformly at random, \(A\) contains exactly one of \(S\) and \(S \Delta \{i\}\).

Clearly, we have $\sum_{i=1}^{n} \beta_i = |\partial A|/2^{n-1}$. The quantity $\sum_{i=1}^{n} \beta_i$ is sometimes called the {\em total influence}.

Ben-Or and Linial \cite{benor} conjectured that for any set \(A \subset \{0,1\}^{n}\) with \(|A|=2^{n-1}\), there exists a coordinate with influence at least \(\Omega(\tfrac{\log_{2} n}{n})\). This was proved by Kahn, Kalai and Linial; it follows from the celebrated KKL Theorem:
\begin{theorem}[Kahn, Kalai, Linial \cite{tribeskkl}]
\label{thm:kkl}
If \(A \subset \{0,1\}^{n}\) with measure \(p\), then
\[\sum_{i=1}^{n} \beta_{i}^{2} \geq Cp^{2}(1-p)^{2} (\ln n)^{2}/n,\]
where \(C>0\) is an absolute constant.
\end{theorem}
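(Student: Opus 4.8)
The plan is to reproduce the Fourier-analytic argument of Kahn, Kalai and Linial, carried out so as to produce the sum-of-squares bound directly. Write $g = \chi_A - \chi_{A^{c}} : \{0,1\}^{n} \to \{-1,1\}$ for the $\pm 1$-indicator of $A$ and expand $g = \sum_{S \subseteq [n]} \hat g(S)\,\chi_{S}$ in the Fourier--Walsh basis, where $\chi_{S}(x) = (-1)^{\sum_{i \in S} x_{i}}$. Two standard identities will be used throughout: first, $\beta_{i} = \sum_{S \ni i} \hat g(S)^{2}$, so the total influence is $\sum_{i=1}^{n} \beta_{i} = \sum_{S} |S|\,\hat g(S)^{2} =: I$; second, $\operatorname{Var}(g) = \sum_{S \neq \emptyset} \hat g(S)^{2} = 4p(1-p)$, so the target inequality is just $\sum_{i} \beta_{i}^{2} \geq c\operatorname{Var}(g)^{2}(\ln n)^{2}/n$ up to an absolute constant. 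We may assume $0 < p < 1$, since otherwise both sides vanish. Also recall the Poincar\'e-type bound $I \geq \operatorname{Var}(g)$ (because $|S| \geq 1$ on $S \neq \emptyset$) and the Cauchy--Schwarz bound $\sum_{i} \beta_{i}^{2} \geq I^{2}/n$; the latter will dispose of the regime where $I$ is large.

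The heart of the matter is a localisation estimate coming from the Bonami--Beckner hypercontractive inequality applied to the discrete derivatives. For $i \in [n]$ set $\mathrm{D}_{i} g(x) = \tfrac12\bigl(g(x^{i \to 1}) - g(x^{i \to -1})\bigr)$; this is $\{-1,0,1\}$-valued, has Fourier support $\{S : i \notin S\}$ with $\widehat{\mathrm{D}_{i} g}(S) = \hat g(S \cup \{i\})$, and since $|\mathrm{D}_{i} g|$ is an indicator we have $\|\mathrm{D}_{i} g\|_{q}^{q} = \beta_{i}$ for every $q > 0$. Applying the $(4/3,2)$-hypercontractive inequality $\|T_{1/\sqrt 3} h\|_{2} \leq \|h\|_{4/3}$ to $h = \mathrm{D}_{i} g$ and squaring gives, after reindexing,
\[
\sum_{S \ni i} 3^{-|S|} \hat g(S)^{2} \;\leq\; \tfrac13\,\beta_{i}^{3/2},
\]
so that small influence in coordinate $i$ forces the Fourier mass of $g$ in that coordinate onto high levels.

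Now truncate at a level $k$ to be chosen: bounding $3^{-|S|} \geq 3^{-k}$ on $|S| \leq k$ and summing the displayed inequality over $i$ yields $\sum_{1 \leq |S| \leq k} |S|\,\hat g(S)^{2} \leq 3^{\,k-1} \sum_{i} \beta_{i}^{3/2}$. On the other hand the left side is at least $\sum_{1 \leq |S| \leq k} \hat g(S)^{2} = \operatorname{Var}(g) - \sum_{|S| > k} \hat g(S)^{2} \geq \operatorname{Var}(g) - I/k$ by Markov's inequality applied to the weight distribution over levels. Taking $k = \lceil 2I/\operatorname{Var}(g) \rceil$ makes the latter at least $\tfrac12\operatorname{Var}(g)$, and since $k - 1 \leq 2I/\operatorname{Var}(g)$ we get $\sum_{i} \beta_{i}^{3/2} \geq \tfrac12\operatorname{Var}(g)\,3^{-2I/\operatorname{Var}(g)}$; combining with $\sum_{i}\beta_{i}^{3/2} = \sum_{i}\beta_{i}\cdot\beta_{i}^{1/2} \leq \bigl(\sum_{i}\beta_{i}^{2}\bigr)^{1/2} I^{1/2}$ (Cauchy--Schwarz) and rearranging gives
\[
\sum_{i=1}^{n} \beta_{i}^{2} \;\geq\; \frac{\operatorname{Var}(g)^{2}}{4I}\,3^{-4I/\operatorname{Var}(g)}.
\]

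Finally, combine this with $\sum_{i}\beta_{i}^{2} \geq I^{2}/n$ and optimise over the size of $I$. Writing $x = I/\operatorname{Var}(g) \geq 1$, we have $\sum_{i}\beta_{i}^{2} \geq \operatorname{Var}(g)^{2} x^{2}/n$ and $\sum_{i}\beta_{i}^{2} \geq \operatorname{Var}(g)\,3^{-4x}/(4x)$. If $x \geq a \ln n$ for a small absolute constant $a$, the first bound already yields $\sum_{i}\beta_{i}^{2} \geq a^{2}\operatorname{Var}(g)^{2}(\ln n)^{2}/n$. If $x < a\ln n$, then since $3^{-4x}/x$ is decreasing the second bound is at least $\operatorname{Var}(g)\,n^{-4a\ln 3}/(4a\ln n)$, and using $\operatorname{Var}(g) \leq 1$ (hence $\operatorname{Var}(g) \geq \operatorname{Var}(g)^{2}$) this exceeds $a^{2}\operatorname{Var}(g)^{2}(\ln n)^{2}/n$ once $n^{1 - 4a\ln 3} \geq 4a^{3}(\ln n)^{3}$, which holds for all large $n$ provided $a < 1/(4\ln 3)$. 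The finitely many remaining small values of $n$ are absorbed by shrinking the constant, using the crude bound $\sum_{i}\beta_{i}^{2} \geq I^{2}/n \geq \operatorname{Var}(g)^{2}/n$ together with the fact that $(\ln n)^{2}$ is then bounded. Substituting $\operatorname{Var}(g)^{2} = 16 p^{2}(1-p)^{2}$ and relabelling the constant completes the proof. The one genuinely delicate step is the hypercontractive localisation estimate; everything after it is routine, and the only real care needed elsewhere is pinning down the absolute constant $a$ so that the polynomial growth in the second regime dominates while the constant $a^{2}$ from the first regime remains positive.
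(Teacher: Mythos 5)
The paper does not prove Theorem \ref{thm:kkl}; it is quoted from Kahn--Kalai--Linial, so there is no in-paper argument to compare against. Your proof is a correct and complete rendition of the standard KKL argument: the hypercontractive bound $\sum_{S\ni i}3^{-|S|}\hat g(S)^2\le\tfrac13\beta_i^{3/2}$ for the discrete derivatives, the level-$k$ truncation with $k\approx 2I/\operatorname{Var}(g)$, and the two-regime optimisation against $\sum_i\beta_i^2\ge I^2/n$ all check out, and the constants are handled legitimately (including absorbing small $n$). Nothing further is needed.
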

\begin{corollary}
\label{corr:oneinfluencelarge}
If \(A \subset \{0,1\}^{n}\) with measure \(p\), then there exists a coordinate \(i \in [n]\) with
\[\beta_{i} \geq C'p(1-p) (\ln n)/n,\]
where \(C'>0\) is an absolute constant.
\end{corollary}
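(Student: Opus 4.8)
The plan is to obtain this corollary from Theorem~\ref{thm:kkl} by a short averaging argument; no idea beyond the KKL Theorem itself is needed. The point is simply that control on $\sum_{i} \beta_i^2$ already controls $\max_i \beta_i$, because the largest of finitely many non-negative numbers is at least their root mean square.

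Concretely, I would argue as follows. Write $\beta = \max_{i \in [n]} \beta_i$. Since $\beta_j^2 \le \beta^2$ for every $j \in [n]$, summing over $j$ gives $\sum_{j=1}^n \beta_j^2 \le n\beta^2$. On the other hand, Theorem~\ref{thm:kkl} gives $\sum_{j=1}^n \beta_j^2 \ge C p^2 (1-p)^2 (\ln n)^2 / n$. Combining these,
\[n\beta^2 \ \ge\ \sum_{j=1}^n \beta_j^2 \ \ge\ \frac{C p^2 (1-p)^2 (\ln n)^2}{n},\]
so $\beta^2 \ge C p^2 (1-p)^2 (\ln n)^2 / n^2$, and hence
\[\beta \ \ge\ \sqrt{C}\,\frac{p(1-p)\ln n}{n},\]
where all quantities under the square root are non-negative since $0 \le p \le 1$ and $\ln n \ge 0$. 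Taking $C' = \sqrt{C}$ and recalling that $\beta = \max_i \beta_i$, we conclude that some coordinate $i \in [n]$ satisfies $\beta_i \ge C' p(1-p)(\ln n)/n$, as required.

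I do not expect any genuine obstacle here: the only step with content is the passage from the $\ell^2$ bound of KKL to an $\ell^\infty$ bound, which is immediate from $\sum_i \beta_i^2 \le n\beta^2$, and the degenerate case $n = 1$ is trivial because then $\ln n = 0$. One might try to improve the constant by using instead $\sum_i \beta_i^2 \le \beta \sum_i \beta_i$ together with a bound on the total influence $\sum_i \beta_i$, but since the total influence can be of order $n$ this buys nothing in the worst case, so the crude estimate $\sum_i \beta_i^2 \le n\beta^2$ is the natural one to use.
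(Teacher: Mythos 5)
Your proof is correct and is exactly the standard derivation the paper leaves implicit: the $\ell^2$ lower bound of Theorem~\ref{thm:kkl} forces $\max_i \beta_i^2$ to be at least the average $\tfrac{1}{n}\sum_i \beta_i^2$, giving the stated bound with $C' = \sqrt{C}$. Nothing further is needed.
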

Corollary \ref{corr:oneinfluencelarge} is sharp up to the value of the absolute constant \(C'\), as can be seen from the `tribes' construction of Ben-Or and Linial \cite{benor}. Let \(n = kl\), and split \([n]\) into \(l\) `tribes' of size \(k\). Let \(A\) be the set of all 0-1 vectors which are identically 0 on at least one tribe. Observe that
\[|A| = (1-(1-2^{-k})^{l})2^{n},\]
\[|\partial A| = n2^{n-k} (1-2^{-k})^{l-1},\]
and
\[\beta_{i} = 2^{-(k-1)} (1-2^{-k})^{l-1}\quad \forall i \in [n].\]
Let \(k=2^{j}\) for some \(j \in \mathbb{N}\), and let \(l = 2^{k}/k\), so that \(n = 2^{k} = 2^{2^{j}}\); then
\[1-p = (1-2^{-k})^{l} = (1-2^{-k})^{2^{k}/k} = 1-1/k+O(1/k^{2}),\]
and
\[\beta_{i} = \frac{2(1-p)}{n(1-2^{-k})} = \frac{2(1-1/k+O(1/k^{2}))}{n}\quad \forall i \in [n],\]
so
\[\frac{\beta_{i}}{p(1-p) \ln(n)/n} = \frac{2(1-1/k+O(1/k^{2}))}{(1/k-O(1/k^{2})(1-O(1/k)) k \ln 2} = \frac{2}{\ln 2}(1+O(1/k)).\]
The best possible values of the constants \(C\) and \(C'\) (in Theorem \ref{thm:kkl} and Corollary \ref{corr:oneinfluencelarge} respectively) remain unknown. Falik and Samorodnitsky \cite{falik} have shown that one can take \(C = 4\), and therefore \(C' = 2\).

Kahn, Kalai and Linial's proof of Theorem \ref{thm:kkl} is one of the first instances of Fourier analysis on \(\{0,1\}^{n}\) being used to prove a purely combinatorial result; Fourier analysis has since become a very important tool in both probabilistic and extremal combinatorics. More recently, Falik and Samorodnitsky \cite{falik} gave an entirely combinatorial proof of Theorem \ref{thm:kkl}; a similar proof was found independently by Rossignol \cite{rossignol}.

In \cite{friedgutjuntatheorem}, Friedgut considers the problem of determining the structure of subsets of $\{0,1\}^n$ with edge-boundary of size at most $K2^{n-1}$, or equivalently, with total influence at most $K$, where $K$ is a constant (or a slowly-growing function of $n$). Using the Fourier-analytic machinery of \cite{tribeskkl}, Friedgut proved the following.

\begin{theorem}[Friedgut's `Junta' theorem]
Let $A \subset \{0,1\}^n$, and suppose that $|\partial A| \leq K2^{n-1}$. Then there exists $B \subset \{0,1\}^n$ such that $|A \Delta B| \leq \epsilon 2^n$, and $B$ is a $\lfloor 2^{C_0 K/\epsilon} \rfloor$-junta, where $C_0$ is an absolute constant.
\end{theorem}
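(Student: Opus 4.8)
The plan is to carry out the Fourier-analytic argument of \cite{tribeskkl}, as Friedgut does. Write $f=2\,\mathbf 1_A-1\colon\{0,1\}^n\to\{-1,1\}$ and expand it in the Walsh--Fourier basis, $f=\sum_{S\subseteq[n]}\hat f(S)\chi_S$ with $\chi_S(x)=(-1)^{\sum_{i\in S}x_i}$. Two standard identities drive everything: $\beta_i=\sum_{S\ni i}\hat f(S)^2$ for each $i$, and $\sum_{i=1}^n\beta_i=\sum_S|S|\hat f(S)^2$, so the hypothesis $|\partial A|\le K2^{n-1}$ reads $\sum_S|S|\hat f(S)^2=\sum_i\beta_i\le K$. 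Fix a threshold $\delta=\delta(K,\epsilon)>0$ to be chosen and let $J=\{i\in[n]:\beta_i\ge\delta\}$ be the set of \emph{influential} coordinates; from $\sum_i\beta_i\le K$ we immediately get $|J|\le K/\delta$, and the whole argument comes down to choosing $\delta$ exponentially small in $K/\epsilon$ while keeping the error of the associated junta approximation below $\epsilon$.

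The candidate junta is $B=\{x:g(x)\ge 0\}$, where $g=\sum_{S\subseteq J}\hat f(S)\chi_S$ is the conditional expectation of $f$ given the coordinates in $J$; then $B$ depends only on the coordinates in $J$, so it is a $|J|$-junta. A one-line rounding argument --- if $x\in A\Delta B$ then $|f(x)-g(x)|\ge 1$, so apply Markov's inequality --- gives $|A\Delta B|\le 2^n\|f-g\|_2^2$, so it suffices to show $\|f-g\|_2^2=\sum_{S\not\subseteq J}\hat f(S)^2\le\epsilon$. Split this sum by the size of $S$. The high-degree part is harmless: $\sum_{|S|>d}\hat f(S)^2\le\frac1d\sum_S|S|\hat f(S)^2\le K/d$, which is at most $\epsilon/2$ once $d=2K/\epsilon$. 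For the low-degree part, any $S$ with $0<|S|\le d$ and $S\not\subseteq J$ contains some $i\notin J$, so
\[
\sum_{\substack{0<|S|\le d\\ S\not\subseteq J}}\hat f(S)^2\ \le\ \sum_{i\notin J}\ \sum_{\substack{S\ni i\\ |S|\le d}}\hat f(S)^2 .
\]

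The crucial ingredient --- and the point at which hypercontractivity enters, exactly as in the proof of Theorem \ref{thm:kkl} --- is an upper bound on the low-degree Fourier weight of $f$ through a coordinate $i$ in terms of $\beta_i$. Apply the Bonami--Beckner inequality $\|T_\rho h\|_2\le\|h\|_{1+\rho^2}$ to the discrete derivative $h=D_i f=-\sum_{S\ni i}\hat f(S)\chi_{S\setminus i}$, which is $\{-1,0,1\}$-valued with $\mathbb E|D_i f|=\beta_i$; since $\|T_\rho D_i f\|_2^2=\rho^{-2}\sum_{S\ni i}\rho^{2|S|}\hat f(S)^2$ and $\|D_i f\|_{1+\rho^2}^2=\beta_i^{2/(1+\rho^2)}$, we obtain for every $\rho\in(0,1)$
\[
\sum_{S\ni i}\rho^{2|S|}\hat f(S)^2\ \le\ \rho^2\,\beta_i^{2/(1+\rho^2)},\qquad\text{and hence}\qquad\sum_{\substack{S\ni i\\ |S|\le d}}\hat f(S)^2\ \le\ \rho^{2-2d}\,\beta_i^{2/(1+\rho^2)}.
\]
For $i\notin J$ we have $\beta_i<\delta$, so $\beta_i^{2/(1+\rho^2)}\le\delta^{(1-\rho^2)/(1+\rho^2)}\beta_i$; summing over $i\notin J$ and using $\sum_i\beta_i\le K$ bounds the low-degree contribution by $\rho^{2-2d}\,\delta^{(1-\rho^2)/(1+\rho^2)}K$. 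Now fix $\rho$ to be an absolute constant (say $\rho=1/2$): this is at most $\epsilon/2$ provided $\delta\le\big(\epsilon\rho^{2d-2}/(2K)\big)^{(1+\rho^2)/(1-\rho^2)}$, and with $d=2K/\epsilon$ such a $\delta$ can be taken of the form $\exp(-\Theta(K/\epsilon))$. Consequently $|J|\le K/\delta\le 2^{C_0K/\epsilon}$ for a suitable absolute constant $C_0$, and $B$ is a $|J|$-junta with $|A\Delta B|\le\epsilon 2^n$, as required.

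The only step I expect to pose a real difficulty is the hypercontractive estimate above: controlling $\sum_{S\ni i,\,|S|\le d}\hat f(S)^2$ by $\beta_i^{2/(1+\rho^2)}$ and then trading the exponent $2/(1+\rho^2)$ down to $1$ at the cost of the factor $\delta^{(1-\rho^2)/(1+\rho^2)}$. It is precisely this exponent loss that forces $\delta$ to be exponentially small, and hence the junta to have size exponential in $K/\epsilon$; one also has to check that $\rho$ may be chosen to be an absolute constant bounded away from $0$ and $1$, so that $d$ remains linear in $K/\epsilon$ and $C_0$ is absolute. Everything else --- the two Fourier identities, the degree truncation, and the rounding from $g$ to $B$ --- is routine bookkeeping.
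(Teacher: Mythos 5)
The paper states this theorem without proof, quoting it from Friedgut \cite{friedgutjuntatheorem}, and your sketch correctly reproduces Friedgut's original hypercontractivity argument: the identities $\beta_i=\sum_{S\ni i}\hat f(S)^2$ and $\sum_i\beta_i=\sum_S|S|\hat f(S)^2$, the Bonami--Beckner bound $\sum_{S\ni i,\,|S|\le d}\hat f(S)^2\le\rho^{2-2d}\beta_i^{2/(1+\rho^2)}$ applied to $D_if$, the degree truncation at $d=2K/\epsilon$, and the rounding of $g=\sum_{S\subseteq J}\hat f(S)\chi_S$ are exactly the standard proof. The one loose end is the regime where $K/\epsilon$ is below an absolute constant, so that $\lfloor 2^{C_0K/\epsilon}\rfloor$ may be as small as $1$ while your bound $|J|\le K/\delta$ carries polynomial prefactors; this is disposed of by the Poincar\'e inequality $4p(1-p)=\sum_{S\neq\emptyset}\hat f(S)^2\le\sum_i\beta_i\le K$, which shows that for such $K$ a constant function (a $0$-junta) already approximates $A$ within $\epsilon 2^n$.
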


Here, if $B \subset \{0,1\}^n$, and $j \in \mathbb{N}$, we say that $B$ is a {\em j-junta} if there exists a set of coordinates $J \subset [n]$ such that $|J| \leq j$, and {\em the event $\{x \in B\}$ depends only upon the values $(x_j)_{j \in J}$}. The condition in italics is of course equivalent to saying that $B$ is a union of subcubes which all have $J$ as their set of fixed coordinates, or that the characteristic function of $B$ depends only upon the cooordinates in $J$.

Freidgut's theorem is sharp up to the value of the absolute constant $C_0$, as can be seen by taking the set $A$ to be a product of a subcube of codimension $\lfloor \log_2(1/(6\epsilon)) \rfloor$, with a set defined by the `tribes' construction above.

In \cite{falik}, Falik and Samorodnitsky use influence-based methods to obtain several other results on subsets of \(\{0,1\}^{n}\) with small edge-boundary.

In this paper, we will investigate the structure of subsets $A \subset \{0,1\}^n$ whose the edge-boundary has size somewhat closer to \(|A| \log_{2}(2^{n}/|A|)\). In particular, we will try to determine how small the edge-boundary must be, to guarantee that $A$ is close in structure to a single subcube. This question has already been investigated by several researchers. Using the techniques of Fourier analysis, Friedgut, Kalai and Naor \cite{fkn} proved that if \(A \subset \{0,1\}^{n}\) with \(|A|=2^{n-1}\) and \(|\partial A| \leq 2^{n-1}(1+\epsilon)\), then \(A\) can be made into a codimension-1 subcube by at most \(K \epsilon 2^{n-1}\) additions and deletions, where \(K\) is an absolute constant. Bollob\'as, Leader and Riordan \cite{leader} conjectured that for any \(N \in \mathbb{N}\), there exists a constant \(K_{N}\) depending on \(N\) such that any \(A \subset \{0,1\}^{n}\) with \(|A|=2^{n-N}\) and
\[|\partial A| \leq (1+\epsilon)|A|\log_{2}(2^{n}/|A|)\]
can be made into a codimension-\(N\) subcube by at most \(K_{N} \epsilon 2^{n-N}\) additions and deletions. They proved this for \(N=2\) and \(N=3\), also using the techniques of Fourier analysis. We remark that \(K_{N}\) must necessarily depend on \(N\). Indeed, as was observed by Samorodnitsky \cite{alextalk}, a variant of the `tribes' construction of Ben-Or and Linial provides an example of a (small) set \(A\) satisfying
\[|\partial A| \leq (1+\epsilon) |A|\log_{2}(2^{n}/|A|),\]
and yet requiring at least \((1-o(1))|A|\) additions and deletions to make it into a subcube. As above, let \(n = kl\), split \([n]\) into \(l\) `tribes' of size \(k\), and let \(A\) be the set of all 0-1 vectors which are identically 0 on at least one tribe. Fix an integer \(s\). Let \(k = 2^{j}\), and let \(l = 2^{k/2^{s}}/k = 2^{2^{j-s}-j}\), so that \(n = 2^{k/2^{s}} = 2^{2^{j-s}}\). Let \(j \to \infty\). Then
\[1-p = (1-2^{-k})^{l} = 1-l2^{-k}+O((l2^{-k})^{2}) \geq 1-l2^{-k},\]
so
\[p \leq l2^{-k},\]
and therefore
\[\log_{2}(1/p) \geq k-\log_{2}l = (1-2^{-s})k+\log_{2}k.\]
Note that
\[|\partial A| = n2^{n-k} (1-2^{-k})^{l-1} = \frac{n2^{n-k}(1-p)}{1-2^{-k}} = n2^{n-k}(1+O(l2^{-k})).\]
Hence,
\begin{eqnarray*}
\frac{|\partial A|}{|A|\log_{2}(2^{n}/|A|)} & \leq & \frac{n2^{n-k}(1+O(l2^{-k}))}{(l2^{-k}(1-O(l2^{-k})))((1-2^{-s})k+\log_{2}k)2^{n}}\\
& = & \frac{kl(1+O(l2^{-k}))}{l((1-2^{-s})k+\log_{2}k)}\\
& = & \frac{1+O(l2^{-k})}{1-2^{-s}+(\log_{2}k)/k}\\
& < & \frac{1}{1-2^{-s}},
\end{eqnarray*}
provided \(j\) is sufficiently large depending on \(s\). For any \(\epsilon > 0\), this can clearly be made \(\leq 1+\epsilon\) by choosing \(s\) to be sufficiently large depending on \(\epsilon\). However, \(A\) is a union of \(l\) codimension-\(k\) subcubes with disjoint sets of fixed coordinates, and therefore requires at least \((1-o(1))|A|\) additions and deletions to make it into a subcube.

Samorodnitsky \cite{alextalk} conjectured that given any \(\delta >0\), there exists an \(a > 0\) such that any \(A \subset \{0,1\}^{n}\) with
\[|\partial A| \leq (1+a/n)|A|\log_{2}(2^{n}/|A|)\]
can be made into a subcube by at most \(\delta|A|\) additions and deletions. Making use of a result of Keevash \cite{keevash} on the structure of \(r\)-uniform hypergraphs with small shadows, he proved that any \(A \subset \{0,1\}^{n}\) with
\[|\partial A| \leq (1+n^{-4})|A|\log_{2}(2^{n}/|A|)\]
can be made into a subcube by at most \(o(|A|)\) additions and deletions.

It turns out that the correct condition to ensure that \(A\) is close to a subcube is that \(|\partial A|/|A|\), the average out-degree of \(A\), is close to \(\log_{2}(2^{n}/|A|)\). Our first main result (Theorem \ref{thm:moreprecisestability}) implies that if \(A \subset \{0,1\}^{n}\) has edge-boundary of size at most
\begin{equation}
 \label{eq:almostisoperimetricset}
|A| (\log_{2}(2^{n}/|A|) + \epsilon),
\end{equation}
where \(\epsilon\) is less than an absolute constant, then it can be made into a subcube by at most \[(1+O(1/\log_{2}(1/\epsilon)))\frac{\epsilon}{\log_{2}(1/\epsilon)}|A| \leq \frac{2\epsilon}{\log_{2}(1/\epsilon)}|A|\]
additions and deletions. This proves the above conjecture of Bollob\'as, Leader and Riordan, and also that of Samorodnitsky.

We then prove Theorem \ref{thm:exactstabilitysubcubes}, which states that if \(A \subset \{0,1\}^{n}\) has size \(2^{t}\) for some \(t \in \mathbb{N}\), and edge-boundary of size at most
\[|A| (\log_{2}(2^{n}/|A|) + \epsilon) = 2^{t}(n-t+\epsilon),\]
where \(\epsilon\) is less than an absolute constant, then it can be made into a \(t\)-dimensional subcube by at most \(\delta_{1}(\epsilon) |A|\) additions and deletions, where \(\delta_{1}(\epsilon)\) is the unique root of
\[x \log_{2}(1/x) = \epsilon\]
in \((0,1/e)\). It follows that if \(A \subset \{0,1\}^{n}\) has size \(2^{t}\) for some \(t \in \mathbb{N}\), and cannot be made into a subcube by fewer than \(\delta |A|\) additions and deletions, then
\[|\partial A| \geq |A| \log_{2}(2^{n}/|A|) + |A| \delta \log_{2}(1/\delta) = 2^{t}(n-t+\delta \log_{2}(1/\delta)),\]
provided \(\delta\) is less than an absolute constant. This is sharp whenever \(\delta = 1/2^{j}\) for some \(j \in \{1,2,\ldots,t\}\).

Our first aim is to prove a `rough' stability result (Theorem \ref{thm:stability}), stating that if \(A\) is `almost isoperimetric', in the sense that the average out-degree of \(\partial A\) is not too far above \(\log_{2}(2^{n}/|A|)\), then \(A\) can be made into a subcube by a small number of additions and deletions. Influence-based methods play a crucial role in our proof. Indeed, it will turn out that a set \(A \subset \{0,1\}^{n}\) satisfying (\ref{eq:almostisoperimetricset}) must have each influence either very small or very large. We will use the following theorem of Talagrand \cite{talagrand}:

\begin{theorem}[Talagrand]
\label{thm:talagrand}
Suppose \(A \subset \{0,1\}^{n}\) with measure
\[\frac{|A|}{2^{n}} = p;\]
then its influences satisfy:
\[\sum_{i=1}^{n}\beta_{i}/\log_{2}(1/\beta_{i}) \geq Kp(1-p),\]
where \(K > 0\) is an absolute constant.
\end{theorem}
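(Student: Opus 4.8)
The plan is to prove Talagrand's inequality by Fourier analysis on the cube, combining the Bonami--Beckner hypercontractive inequality with an integration over the noise parameter. Applying hypercontractivity at a single fixed noise rate yields only a KKL-type lower bound on the total influence; the extra device needed to recover the factor $1/\log_2(1/\beta_i)$ is to keep the rate free and then integrate. \textbf{Setup.} Identify $\{0,1\}^n$ with $\{-1,1\}^n$, let $f=\mathbf{1}_A$, and write $f=\sum_{S\subseteq[n]}\hat f(S)\chi_S$ in the Walsh basis, so that $\mathrm{Var}(f)=p(1-p)=\sum_{S\neq\emptyset}\hat f(S)^2$. For each $i\in[n]$ set $D_i f=\sum_{S\ni i}\hat f(S)\chi_{S\setminus\{i\}}$; this depends only on the coordinates outside $i$, takes values in $\{-\tfrac{1}{2},0,\tfrac{1}{2}\}$, and is nonzero exactly on the set of points at which $i$ is pivotal --- a set of measure $\beta_i$. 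Hence $\|D_i f\|_q^{\,q}=2^{-q}\beta_i$ for every $q>0$, while $\|D_i f\|_2^2=\sum_{S\ni i}\hat f(S)^2$ and $\|T_\rho D_i f\|_2^2=\sum_{S\ni i}\rho^{2(|S|-1)}\hat f(S)^2$.

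\textbf{Step 1 (hypercontractivity).} For $\rho\in[0,1]$ the Bonami--Beckner inequality gives $\|T_\rho g\|_2\le\|g\|_{1+\rho^2}$ for every $g$; applying it to $g=D_i f$ and squaring yields
\[\sum_{S\ni i}\rho^{2(|S|-1)}\hat f(S)^2\ \le\ \|D_i f\|_{1+\rho^2}^2\ =\ \tfrac{1}{4}\,\beta_i^{\,2/(1+\rho^2)}.\]
Summing over $i$ gives, for every $\rho\in(0,1)$,
\[\sum_{S\neq\emptyset}|S|\,\rho^{2(|S|-1)}\hat f(S)^2\ \le\ \tfrac{1}{4}\sum_{i=1}^n\beta_i^{\,2/(1+\rho^2)}.\]

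\textbf{Step 2 (integration).} Multiply both sides by $2\rho$ and integrate over $\rho\in(0,1)$. By Fubini and the identity $\int_0^1 2|S|\,\rho^{2|S|-1}\,d\rho=1$ (valid for every nonempty $S$), the left-hand side integrates exactly to $\sum_{S\neq\emptyset}\hat f(S)^2=\mathrm{Var}(f)$, so
\[\mathrm{Var}(f)\ \le\ \tfrac{1}{4}\sum_{i=1}^n\int_0^1\beta_i^{\,2/(1+\rho^2)}\,2\rho\,d\rho.\]
The substitution $v=2/(1+\rho^2)$ turns the inner integral into $\int_1^2 2v^{-2}\beta_i^{\,v}\,dv\le 2\int_1^2 e^{-v\ln(1/\beta_i)}\,dv\le 2\beta_i/\ln(1/\beta_i)$, which holds for every $\beta_i\in(0,1)$ (and trivially when $\beta_i\in\{0,1\}$). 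Combining, $\mathrm{Var}(f)\le\tfrac{1}{2}\sum_i\beta_i/\ln(1/\beta_i)$; converting the logarithm gives $\sum_{i=1}^n\beta_i/\log_2(1/\beta_i)\ge 2\ln 2\cdot p(1-p)$, which is the assertion with $K=2\ln 2$.

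\textbf{Main obstacle.} The one genuinely delicate step is Step 2: the hypercontractive bound has to be integrated against the right weight and over the entire range of the noise parameter so that the Fourier side re-assembles $\mathrm{Var}(f)$ exactly --- this is what forces the exponent $\rho^{2(|S|-1)}$, and hence the use of the lowered derivative $D_i f$ in place of the raw difference of $f$ with its coordinate-$i$ flip --- and one must then check that the one-variable integral $\int_0^1\beta^{2/(1+\rho^2)}\,2\rho\,d\rho$ genuinely has order $\beta/\log(1/\beta)$, uniformly in $\beta\in(0,1)$, including the degenerate regimes $\beta\to 0$ (where the integrand is of order $\beta^2$ near $\rho=0$) and $\beta\to 1$. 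Everything else --- Parseval, the computation of the elementary integrals, and the interchange of sum and integral --- is routine, and, pleasantly, no case split on the value of $p$ is needed.
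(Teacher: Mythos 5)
Your proof is correct. Note that the paper does not prove this statement at all: it quotes it as a known theorem of Talagrand and uses it as a black box, so there is nothing internal to compare against. What you have written is a clean, self-contained proof along the standard route (essentially Talagrand's own argument, in its modern streamlined form): the identity $\mathrm{Var}(f)=\sum_i\sum_{S\ni i}\hat f(S)^2/|S|$ is realised as $\int_0^1\sum_i\|T_\rho D_if\|_2^2\,2\rho\,d\rho$, each summand is bounded by hypercontractivity at noise rate $\rho$, and the resulting one-variable integral $\int_1^2 2v^{-2}\beta^v\,dv$ is correctly estimated by $2\beta/\ln(1/\beta)$. All the computations check out: $\|D_if\|_{1+\rho^2}^2=\tfrac14\beta_i^{2/(1+\rho^2)}$, the weight $2\rho$ makes the Fourier side collapse exactly to $\mathrm{Var}(f)$, and the degenerate cases $\beta_i\in\{0,1\}$ are harmless. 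A pleasant bonus of your argument is the explicit constant $K=2\ln 2\approx 1.386$, which is already rather close to the value $K=2$ conjectured to be optimal in the paper's concluding section (and would slightly improve the numerical constant $c=2^{-32K}/6$ used in Lemma \ref{lemma:case1mustoccur}, where only the existence of some absolute $K>0$ is needed).
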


This implies that if all the influences are small, the edge-boundary must be very large. This will help to show that there must be a coordinate, \(i\) say, of very large influence. It will follow that one of the \(i\)-sections of \(A\) is very small. An inductive argument will enable us to complete the proof.

\section{Main results}
We first prove a sequence of results on the rough structure of subsets of \(\{0,1\}^n\) with small edge-boundary. If \(A \subset \{0,1\}^n\), and \(i \in [n]\), we define
\[\gamma_i = \frac{\min\{|A_i^{+}|,|A_{i}^{-}|\}}{|A|}.\]
(Observe that we always have \(\gamma_i \leq 1/2\).) We first show that if \(A \subset \{0,1\}^n\) has small edge-boundary, then for each \(i \in [n]\), either one of the \(i\)-sections of \(A\) is very small, or else the upper and lower \(i\)-sections of \(A\) have very similar sizes.

\begin{lemma}
\label{lemma:cases}
Let \(A \subset \{0,1\}^n\) with
\begin{equation}
\label{eq:smallboundary}
|\partial A| = |A| (\log_{2}(2^{n}/|A|)+\epsilon_0).
\end{equation}
Then for each \(i \in [n]\), either
\begin{enumerate}
 \item \(\gamma_{i} \leq \epsilon_{0}/(5(\log_{2}5-2))\), or
\item \(1/2 - \epsilon_{0} < \gamma_{i} \leq 1/2\).
\end{enumerate}
\end{lemma}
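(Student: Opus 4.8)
The plan is to fix a coordinate $i \in [n]$ and to compare $\partial A$ with the edge-boundaries of the two $i$-sections of $A$ taken inside $Q_{n-1}$. Write $a = |A_i^{+}|$, $b = |A_i^{-}|$, and assume without loss of generality that $a \le b$, so that $\gamma_i = a/|A|$ and $a + b = |A|$. Splitting the edges of $\partial A$ according to direction, the direction-$i$ edges number $|A_i^{+} \Delta A_i^{-}|$, while the edges in the other $n-1$ directions split, according to the value of the $i$-th coordinate, into the edge-boundary of $A_i^{+}$ and the edge-boundary of $A_i^{-}$ (each taken in $Q_{n-1}$); hence
\[|\partial A| = |A_i^{+} \Delta A_i^{-}| + |\partial A_i^{+}| + |\partial A_i^{-}|.\]
Applying the edge-isoperimetric inequality (\ref{eq:edgeiso}) in $Q_{n-1}$ to $A_i^{+}$ and to $A_i^{-}$, and bounding $|A_i^{+} \Delta A_i^{-}| \ge b - a$, gives a lower bound for $|\partial A|$ depending only on $a$ and $b$.

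The next step is a short computation. Substituting this lower bound into the hypothesis (\ref{eq:smallboundary}), the terms involving $n$ and $\log_2 |A|$ cancel, and --- writing $\gamma = \gamma_i$ and grouping the remaining $x\log_2 x$ terms --- one is left with
\[\epsilon_0 \ \ge\ H(\gamma_i) - 2\gamma_i,\]
where $H(x) = -x\log_2 x - (1-x)\log_2(1-x)$ is the binary entropy function. Thus the whole lemma reduces to an analysis of the single-variable function $f(x) = H(x) - 2x$ on $[0,\tfrac12]$.

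The facts I need about $f$ are elementary: since $H'' < 0$ the function $f$ is concave, $f(0) = f(\tfrac12) = 0$, and since $f'(x) = \log_2\tfrac{1-x}{x} - 2$, the maximum of $f$ on $[0,\tfrac12]$ is attained at $x = \tfrac15$, with $f(\tfrac15) = \log_2 5 - 2$. A concave function lies above its chords, so on $[0,\tfrac15]$ we get $f(x) \ge 5(\log_2 5 - 2)\,x$, and on $[\tfrac15,\tfrac12]$ we get $f(x) \ge \tfrac{10}{3}(\log_2 5 - 2)(\tfrac12 - x) \ge \tfrac12 - x$ (the last step using $\log_2 5 - 2 > \tfrac{3}{10}$). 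Feeding these into $\epsilon_0 \ge f(\gamma_i)$: if $\gamma_i \le \tfrac15$ then $\gamma_i \le \epsilon_0/(5(\log_2 5 - 2))$, which is case 1; and if $\gamma_i > \tfrac15$ then $\gamma_i \ge \tfrac12 - \epsilon_0$, which is case 2 (the inequality being strict unless $A$ is itself a subcube, i.e. $\epsilon_0 = 0$).

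I do not expect a real obstacle: the decomposition of $\partial A$ together with the edge-isoperimetric inequality does the heavy lifting, and what remains is a concavity estimate for the entropy function. The only points needing a little care are keeping track of signs and of the distinction between $2^{n-1}$ and $2^{n}$ inside the logarithms during the cancellation, and verifying the numerical inequality $\log_2 5 - 2 > \tfrac{3}{10}$ that makes the chord over $[\tfrac15,\tfrac12]$ dominate the line $\tfrac12 - x$.
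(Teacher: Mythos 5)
Your proposal is correct and follows essentially the same route as the paper: the same decomposition of $\partial A$ into the two section boundaries plus the direction-$i$ edges, the edge-isoperimetric inequality applied to the sections, and the reduction to the concavity estimate for $F(\gamma)=H_2(\gamma)-2\gamma$ with its maximum $\log_2 5-2$ at $\gamma=1/5$. The only cosmetic difference is that the paper carries the exact error terms $\epsilon^{+},\epsilon^{-}$ of the sections (needed later for the induction) and then discards them, whereas you discard them immediately.
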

\begin{proof}
Let \(A \subset \{0,1\}^{n}\) satisfying the hypothesis of the lemma. Write
\[p = \frac{|A|}{2^{n}}\]
for the measure of \(A\); then
\[|\partial A| = 2^n p (\log_{2}(1/p)+\epsilon_0).\]
Fix \(i \in [n]\). Without loss of generality, we may assume that \(|A_{i}^{+}| \leq |A_{i}^{-}|\), so
\[\gamma_{i} = \frac{|A_{i}^{+}|}{|A|}.\]
Write \(\gamma = \gamma_i\). Let
\[p^{+} = \frac{|A_{i}^{+}|}{2^{n-1}},\ p^{-}= \frac{|A_{i}^{-}|}{2^{n-1}};\]
note that
\[p^{+} = 2 \gamma p,\ p^{-} =2(1-\gamma)p.\]

Define \(\epsilon^{+},\epsilon^{-}\) by
\[|\partial A_{i}^{+}| = |A_{i}^{+}|(\log_{2}(2^{n-1}/|A_{i}^{+}|) + \epsilon^{+}),\quad |\partial A_{i}^{-}| = |A_{i}^{-}|(\log_{2}(2^{n-1}/|A_{i}^{-}|) + \epsilon^{-}).\]
Observe that
\begin{eqnarray}
\label{eq:indbound}
|\partial A| & = & |\partial A_{i}^{+}|+|\partial A_{i}^{-}| + |A_{i}^{+} \Delta A_{i}^{-}| \nonumber \\
& = & |A_{i}^{+}|(\log_{2}(2^{n-1}/|A_{i}^{+}|)+ \epsilon^{+}) + |A_{i}^{-}|(\log_{2}(2^{n-1}/|A^{-}|) + \epsilon^{-}) + |A_{i}^{+} \Delta A_{i}^{-}| \nonumber \\
& = & \gamma |A| \log_{2}(2^{n}/(2\gamma|A|))+(1-\gamma)|A|(\log_{2}(2^{n}/(2(1-\gamma)|A|) + \epsilon^{+} |A_{i}^{+}| +\epsilon^{-} |A_{i}^{-}|\nonumber \\
&& + |A_{i}^{+} \Delta A_{i}^{-}| \nonumber \\
& = & |A| \log_{2}(2^{n}/|A|)-(1-H_{2}(\gamma))|A| + \epsilon^{+} |A_{i}^{+}| +\epsilon^{-} |A_{i}^{-}| + |A_{i}^{+} \Delta A_{i}^{-}|\\
& \geq & |A| \log_{2}(2^{n}/|A|)-(1-H_{2}(\gamma))|A| + \epsilon^{+} |A_{i}^{+}| +\epsilon^{-} |A_{i}^{-}| + \left||A_{i}^{+}|-|A_{i}^{-}|\right| \nonumber \\
& = & |A| \log_{2}(2^{n}/|A|)-(1-H_{2}(\gamma))|A| + \epsilon^{+} |A_{i}^{+}| +\epsilon^{-} |A_{i}^{-}|+ (1-2\gamma)|A| \nonumber \\
& = & |A| \log_{2}(2^{n}/|A|)+(H_{2}(\gamma)-2\gamma)|A| + \epsilon^{+} |A_{i}^{+}| +\epsilon^{-} |A_{i}^{-}| \nonumber\\
& = & |A| \log_{2}(2^{n}/|A|)+F(\gamma)|A| + \epsilon^{+} |A_{i}^{+}| +\epsilon^{-} |A_{i}^{-}|,\nonumber
\end{eqnarray}
where \(H_{2}:[0,1] \to \mathbb{R}\) denotes the {\em binary entropy} function,
\[H_{2}(\gamma) := \gamma \log_{2}(1/\gamma)+(1-\gamma) \log_{2}(1/(1-\gamma)),\]
and
\[F(\gamma):=H_{2}(\gamma)-2\gamma.\]
Hence, (\ref{eq:smallboundary}) implies that
\begin{equation}
 \label{eq:inductiveiso}
\gamma \epsilon^{+} + (1-\gamma) \epsilon^{-} + F(\gamma) \leq \epsilon_{0}.
\end{equation}
Therefore, crudely,
\[F(\gamma) \leq \epsilon_{0}.\]
The function \(F\) is concave on \([0,1/2]\), and attains its maximum at \(\gamma = 1/5\), where it takes the value \(\log_{2}5-2\). Hence, for \(\gamma \leq 1/5\),
\[F(\gamma) \geq 5(\log_{2}5-2)\gamma,\]
whereas for \(1/5 \leq \gamma \leq 1/2\),
\[F(1/2-\eta) \geq \tfrac{10}{3} (\log_{2}5-2)\eta > \eta.\]
Hence, for each \(i \in [n]\), either
\begin{enumerate}
 \item \(\gamma_{i} \leq \epsilon_{0}/(5(\log_{2}5-2))\), or
\item \(1/2 - \epsilon_{0} < \gamma_{i} \leq 1/2\),
\end{enumerate}
proving the lemma.
\end{proof}
\begin{remark}
\label{remark:influences}
We can of course rephrase the conclusion of Lemma \ref{lemma:cases} in terms of influences. Let \(A \subset \{0,1\}^n\) satisfying (\ref{eq:almostiso}). Observe that if case 1 occurs for \(i \in [n]\), then
\begin{equation}
 \label{eq:largeinfluence}
\beta_{i} \geq (1-2\gamma_{i}) |A|/2^{n-1} = 2(1-2\gamma_{i})p \geq 2\left(1-2\frac{\epsilon_0}{5(\log_{2}5-2)}\right)p,
\end{equation}
---the \(i\)th influence is `large'.

If, on the other hand, case 2 occurs, then by (\ref{eq:indbound}), we have
\[|A_{i}^{+} \Delta A_{i}^{-}| \leq |\partial A| - |A| \log_{2}(2^{n}/|A|)+(1-H_{2}(\gamma_{i}))|A| = (\epsilon_{0} + 1-H_{2}(\gamma_{i}))|A|.\]
Since \(H_{2}\) is concave, with \(H_{2}(1/2) = 1\), we have
\[1-H_{2}(1/2 - \eta) \leq 2\eta\ (0 \leq \eta \leq 1/2),\]
and therefore
\[|A_{i}^{+} \Delta A_{i}^{-}| < 3\epsilon_{0} |A|,\]
i.e.
\[\beta_{i} < 6\epsilon_{0} p,\]
---the \(i\)th influence is `small'.
\end{remark}

We now show that if the edge-boundary of \(A\) is sufficiently small, then case 1 in Lemma \ref{lemma:cases} must occur for some \(i \in [n]\).

\begin{lemma}
\label{lemma:case1mustoccur}
There exists an absolute constant \(c>0\) such that the following holds. If \(\epsilon \leq c\), and \(A \subset \{0,1\}^{n}\) with measure
\[\frac{|A|}{2^{n}} \leq 1-\epsilon,\]
and
\begin{equation}
 \label{eq:almostiso}
|\partial A| \leq |A| (\log_{2}(2^{n}/|A|)+\epsilon);
\end{equation}
then case 1 must occur for some \(i \in [n]\), i.e. \(\gamma_{i} \leq \epsilon/(5(\log_{2}5-2))\) for some \(i \in [n]\).
\end{lemma}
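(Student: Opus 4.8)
The plan is to argue by contradiction: suppose case~1 fails for every coordinate, so that $1/2 - \epsilon < \gamma_i \le 1/2$ for all $i \in [n]$. By Remark~\ref{remark:influences} this forces every influence to be small, namely $\beta_i < 6\epsilon_0 p$ (here $\epsilon_0 = \epsilon$), uniformly in $i$. I would then play this off against Talagrand's inequality (Theorem~\ref{thm:talagrand}), which gives $\sum_{i=1}^n \beta_i/\log_2(1/\beta_i) \ge K p(1-p)$. Since each $\beta_i$ is tiny, each summand $\beta_i/\log_2(1/\beta_i)$ is at most roughly $\beta_i / \log_2(1/(6\epsilon p))$, and summing over $i$ gives
\[
\frac{|\partial A|}{2^{n-1}} = \sum_{i=1}^n \beta_i \;\ge\; \log_2\!\big(1/(6\epsilon p)\big) \sum_{i=1}^n \frac{\beta_i}{\log_2(1/\beta_i)} \;\ge\; K p (1-p)\,\log_2\!\big(1/(6\epsilon p)\big),
\]
so that the average out-degree $|\partial A|/|A|$ is at least about $2(1-p)\big(\log_2(1/p) + \log_2(1/(6\epsilon))\big)K$. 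Comparing with the hypothesis $|\partial A|/|A| \le \log_2(1/p) + \epsilon$, I want to extract a contradiction once $\epsilon$ is smaller than an absolute constant.

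The key case distinction is on the size of $p$. When $p$ is bounded away from $1$ (say $p \le 3/4$), the factor $2(1-p)K$ is bounded below, and the extra term $K p(1-p)\log_2(1/(6\epsilon))$ grows like $\log_2(1/\epsilon)$, which eventually dwarfs $\log_2(1/p) + \epsilon$; this is the easy regime. The delicate regime is $p$ close to $1$, where $1-p$ is small and both $\log_2(1/p)$ and the right-hand side are small. Here one should use the hypothesis $p \le 1 - \epsilon$ so that $1-p \ge \epsilon$, and the fact that $\log_2(1/p) \le (1-p)/(p\ln 2) = O(1-p)$ for $p$ near $1$. The Talagrand bound then gives $|\partial A|/|A| \ge 2K(1-p)\log_2(1/(6\epsilon p)) \ge 2K(1-p)\log_2(1/(6\epsilon))$ roughly, while the hypothesis gives $|\partial A|/|A| \le O(1-p) + \epsilon = O(1-p)$; since $1-p \ge \epsilon$, we get $\epsilon \le O(1-p)$, so dividing through by $1-p$ yields $2K \log_2(1/(6\epsilon)) \le O(1)$, again impossible for small $\epsilon$.

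The main obstacle will be handling the regime $p \to 1$ cleanly and uniformly: one must be careful that the constants coming from Talagrand's theorem, from the bound $\beta_i < 6\epsilon p$, and from the estimate $\log_2(1/p) \asymp 1-p$ all combine to give a genuinely absolute threshold $c$ for $\epsilon$, independent of $n$ and of $p$. A secondary point is that $\log_2(1/\beta_i)$ could in principle be problematic if some $\beta_i$ were extremely small or zero, but since we are in the case where all $\beta_i$ are within a constant factor of $2(1-p)p$ from below (via $\beta_i \ge 2(1-2\gamma_i)p$ is \emph{not} available here — rather, one uses that not all $\beta_i$ can vanish because $A$ is neither $\emptyset$ nor the whole cube), a mild lower bound on the nonzero influences, or simply discarding coordinates with $\beta_i = 0$, suffices; these contribute $0$ to both sides. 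Once the contradiction is established in both regimes, the lemma follows with $c$ taken as the minimum of the two thresholds.
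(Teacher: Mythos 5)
There is a genuine gap, and it sits exactly where you claim things are easy. You propose to run the Talagrand comparison directly for all $p\le 3/4$, asserting that the extra term $Kp(1-p)\log_2(1/(6\epsilon))$ ``eventually dwarfs $\log_2(1/p)+\epsilon$''. It does not, because the two sides scale differently in $p$. Under the contradiction hypothesis (all coordinates in case 2), Remark \ref{remark:influences} gives $\beta_i<6\epsilon_0 p$ for all $i$, and Theorem \ref{thm:talagrand} then yields $\sum_i\beta_i > Kp(1-p)\log_2\bigl(1/(6\epsilon p)\bigr)$, while (\ref{eq:almostiso}) caps the total influence at $2p\bigl(\log_2(1/p)+\epsilon\bigr)$. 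Dividing by $p$, the contradiction requires
\[K(1-p)\Bigl(\log_2(1/p)+\log_2\bigl(1/(6\epsilon)\bigr)\Bigr) > 2\log_2(1/p)+2\epsilon.\]
For fixed $\epsilon$ and $p\to 0$ the left side is $\sim K\log_2(1/p)$ and the right side is $\sim 2\log_2(1/p)$, so the inequality fails — no matter how small you take $\epsilon$ — unless $K\ge 2$. But $K$ is an unspecified absolute constant in Theorem \ref{thm:talagrand}; the paper only \emph{conjectures} that $K=2$ is admissible (and notes this would be best possible), so your argument cannot be closed in the small-$p$ regime. The ``extra term'' you rely on is $O\bigl(p\log_2(1/\epsilon)\bigr)$, a bounded multiple of $p$, whereas the deficit $\bigl(2-K(1-p)\bigr)p\log_2(1/p)$ is an unbounded multiple of $p$ as $p\to 0$ whenever $K(1-p)<2$.

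This is precisely why the paper runs the direct Talagrand argument only for $p\in[1/2,7/8]$, where $\log_2(1/p)\le 1$ and $1-p\ge 1/8$, so that any positive $K$ suffices once $\epsilon$ is small. For $p\in[7/8,1-\epsilon]$ it shows that (\ref{eq:almostiso}) cannot hold at all, by applying the isoperimetric inequality (\ref{eq:edgeiso}) to $A^c$; your alternative treatment of that regime via Talagrand and $1-p\ge\epsilon$ looks salvageable but is unnecessary. The case $p\le 1/2$ — which your proposal does not actually handle — is dealt with by a quite different mechanism: assuming case 2 everywhere, inequality (\ref{eq:inductiveiso}) shows that one of the two $i$-sections again satisfies the hypotheses of the lemma with measure roughly doubled; case 1 for that section would force a large influence of $A$ itself (via (\ref{eq:largeinfluence})), a contradiction, so the section again has case 2 everywhere, and one iterates until the measure enters $[1/2,7/8]$ or a section becomes empty. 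You would need to supply this induction (or else a proof that Talagrand's inequality holds with $K\ge 2$) to complete your argument.
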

\begin{proof}
We can easily prove the lemma for sets with measure \(p \in [1/2,7/8]\). Suppose \(A \subset \{0,1\}^n\) has measure \(p \in [1/2,7/8]\) and satisfies (\ref{eq:almostiso}). Suppose for a contradiction that case 2 occurs for every \(i \in [n]\). Then by Remark \ref{remark:influences}, \(\beta_{i} < 6\epsilon p\) for every \(i \in [n]\), and therefore by Theorem \ref{thm:talagrand},
\[\sum_{i=1}^{n} \beta_{i} > Kp(1-p) \log_{2} \left(\frac{1}{6\epsilon p}\right).\]
The right-hand side is at least
\[2p (\log_{2}(1/p)+\epsilon)\]
provided
\[\tfrac{K}{8} \log_{2}\left(\frac{1}{6\epsilon}\right) \geq 2(1+\epsilon),\]
which holds for all \(\epsilon \leq c: = 2^{-32K}/6\). This contradicts (\ref{eq:almostiso}), proving the lemma for \(p \in [1/2,7/8]\).

Now observe that {\em any} set \(A \subset \{0,1\}^n\) with measure \(p \in [7/8,1-\epsilon]\) has
\begin{equation}
\label{eq:toolarge}
|\partial A| > |A| (\log_{2}(2^{n}/|A|)+\epsilon),
\end{equation}
To see this, just apply the edge-isoperimetric inequality (\ref{eq:edgeiso}) to \(A^{c}\):
\[|\partial A|=|\partial (A^{c})| \geq 2^{n} (1-p) \log_{2}(1/(1-p)).\]
It is easily checked that
\[2^{n} (1-p) \log_{2}(1/(1-p)) > 2^{n} p (\log_{2}(1/p)+1-p)\quad \forall p \geq 7/8,\]
so (\ref{eq:toolarge}) holds for all \(p \in [7/8,1-\epsilon]\). Hence, any set \(A \subset \{0,1\}^n\) satisfying (\ref{eq:almostiso}) must have measure \(p \leq 7/8\).

It remains to prove the lemma for all sets of measure \(p \leq 1/2\). Suppose \(A\) has measure \(p \leq 1/2\) and satisfies (\ref{eq:almostiso}). Suppose for a contradiction that case 2 occurs for every \(i \in [n]\).

Fix any \(i \in [n]\). Without loss of generality, we may assume that \(|A_{i}^{+}| \leq |A_{i}^{-}|\), so that
\[\gamma_{i} = \frac{|A_{i}^{+}|}{|A|}.\]
Write \(\gamma = \gamma_i\). Define \(\epsilon^{+}\) and \(\epsilon^{-}\) as in the proof of Lemma \ref{lemma:cases}. By (\ref{eq:inductiveiso}), we have
\[\gamma \epsilon^{+} + (1-\gamma) \epsilon^{-} + F(\gamma) \leq \epsilon.\]
Hence, crudely,
\[\gamma \epsilon^{+} + (1-\gamma) \epsilon^{-} \leq \epsilon,\]
so either \(\epsilon^{+} \leq \epsilon\) or \(\epsilon^{-} \leq \epsilon\).

If \(\epsilon^{+} \leq \epsilon\), then let \(A' = A_{i}^{+}\). The set \(A'\) is a subset of \(\mathcal{P}([n] \setminus \{i\})\) of measure \(p' := 2\gamma p \in ((1-2\epsilon)p,p) \subset [0,1/2]\), satisfying the conditions of the lemma.

If \(\epsilon^{-} \leq \epsilon\), then let \(A' =A_{i}^{-}\); the set \(A'\) is a subset of \(\mathcal{P}([n] \setminus \{i\})\) of measure \(p': = 2(1-\gamma) p < 2(1/2 + \epsilon)p \leq 1/2+\epsilon < 7/8\), satisfying the conditions of the lemma.

If \(A'\) has case 1 occurring for some \(j\), then by (\ref{eq:largeinfluence}),
\begin{eqnarray*}
\beta_{j}' & \geq & 2\left(1-2\frac{\epsilon}{5(\log_{2}5-2)}\right)p'\\
& \geq & 2\left(1-2\frac{\epsilon}{5(\log_{2}5-2)}\right)(1-2\epsilon)p\\
& > & 2(1-2\epsilon)^{2}p,
\end{eqnarray*}
and therefore
\[\beta_{j} > (1-2\epsilon)^{2} p > 6 \epsilon p,\]
contradicting our assumption that \(A\) has case 2 occurring for every \(i \in [n]\). Therefore, \(A'\) also has case 2 occurring for every coordinate. Hence, it must have measure \(p' < 1/2\), by the above argument for sets of measure in \([1/2,7/8]\). Repeat the same argument for \(A'\), and continue; we obtain a sequence of set systems \((A^{(l)})\) on ground sets of sizes \(n-l\), all with measure \(< 1/2\), satisfying the conditions of the lemma, and with case 2 occurring for every coordinate. Stop at the minimum \(M\) such that \(A^{(M)} = \emptyset\); clearly, \(M \leq n-1\). Then \(A^{(M-1)}\) has one of its \(j\)-sections empty for some \(j\), so case 1 must occur for this \(j\), a contradiction. This proves the lemma.
\end{proof}

We can now prove a rough stability result for subsets of \(\{0,1\}^n\) with small edge-boundary:
\begin{theorem}
\label{thm:stability}
There exists an absolute constant \(c>0\) such that if \(A \subset \{0,1\}^{n}\) with
\[|\partial A| \leq |A| \log_{2}(2^{n}/|A|) + \epsilon |A|,\]
for some \(\epsilon \leq c\), then
\[|A \Delta C|/|A| < 3 \epsilon\]
for some subcube \(C\).
\end{theorem}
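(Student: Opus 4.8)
The plan is to prove the theorem by induction on $n$, the engine being Lemma \ref{lemma:case1mustoccur}: it supplies a coordinate $i$ one of whose $i$-sections is tiny, and the idea is to throw that section away and recurse on the larger one. Before applying it one must dispose of sets of large measure: writing $p = |A|/2^n$, if $p > 1-\epsilon$ then $A$ differs from the subcube $\{0,1\}^n$ in only $2^n(1-p) < \epsilon 2^n < 3\epsilon|A|$ points (using $p > 1-c > \tfrac13$), so we are done; this also serves as the base case $n=0$, since every nonempty $A \subseteq \{0,1\}^0$ has $p = 1$. So assume $p \le 1-\epsilon$. Since $\epsilon \le c$, Lemma \ref{lemma:case1mustoccur} produces a coordinate $i$ for which case 1 holds; taking without loss of generality $|A_i^+| \le |A_i^-|$, we have $\gamma := \gamma_i = |A_i^+|/|A| \le \epsilon/(5(\log_2 5 - 2)) < \tfrac15$.

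Next I would define $\epsilon^+, \epsilon^-$ exactly as in the proof of Lemma \ref{lemma:cases}, so that $|\partial A_i^-| = |A_i^-|(\log_2(2^{n-1}/|A_i^-|) + \epsilon^-)$; both are $\ge 0$ by the edge-isoperimetric inequality (\ref{eq:edgeiso}). Since $F \ge 0$ on $[0,\tfrac12]$, inequality (\ref{eq:inductiveiso}) gives
\[(1-\gamma)\epsilon^- \le \epsilon - \gamma\epsilon^+ - F(\gamma) \le \epsilon - F(\gamma).\]
As $\gamma \le \tfrac15$, the bound $F(\gamma) \ge 5(\log_2 5 - 2)\gamma$ from the proof of Lemma \ref{lemma:cases} applies, and since $5(\log_2 5 - 2) > 1 > \epsilon$ this forces $\epsilon^- \le \epsilon \le c$. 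Hence the inductive hypothesis applies to $A_i^- \subseteq \mathcal{P}([n]\setminus\{i\})$, giving a subcube $C' \subseteq \mathcal{P}([n]\setminus\{i\})$ with $|A_i^- \Delta C'| \le 3\epsilon^-|A_i^-|$ (the degenerate case $\epsilon^- = 0$ meaning $A_i^-$ is itself a subcube). Then $C := \{x \in \{0,1\}^n : i \notin x,\ x \setminus \{i\} \in C'\}$ is a subcube of $\{0,1\}^n$ with $|A \Delta C| = |A_i^+| + |A_i^- \Delta C'|$, and
\[|A \Delta C| \le \gamma|A| + 3\epsilon^-(1-\gamma)|A| \le \gamma|A| + 3(\epsilon - F(\gamma))|A| \le \big(3\epsilon - (15(\log_2 5 - 2) - 1)\gamma\big)|A| \le 3\epsilon|A|.\]
Since $15(\log_2 5 - 2) - 1 > 0$ this is strict when $\gamma > 0$; the case $\gamma = 0$ (where $i$ is fixed in $A$, $|A_i^-| = |A|$, and the measure roughly doubles on passing down) reduces after finitely many steps to the large-measure case or to a step with $\gamma > 0$, recovering strictness. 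That closes the induction.

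The conceptual point --- and the place where the constants must be checked --- is the displayed chain above. Two things have to go right simultaneously: $\epsilon^-$ must not exceed $\epsilon$ (so the induction closes with the same absolute constant $c$ throughout), and the $\gamma|A|$ points discarded in passing to $A_i^-$ must be more than repaid. Both are delivered by the single inequality $F(\gamma) = H_2(\gamma) - 2\gamma \ge 5(\log_2 5 - 2)\gamma$ valid for $\gamma \le \tfrac15$, i.e. by the fact that the entropy deficit in (\ref{eq:inductiveiso}) grows linearly in $\gamma$ near $0$ with slope $5(\log_2 5 - 2) \approx 1.61$, comfortably above $1$. I do not expect any genuine obstacle beyond bookkeeping: the only mild subtlety is that $A_i^-$ can have measure close to $1$, which is why the theorem is stated with no constraint on $|A|$ and why the large-measure case has to be split off at the top of each inductive step.
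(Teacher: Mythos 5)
Your proof is correct and follows essentially the same route as the paper's: the same engine (Lemma \ref{lemma:case1mustoccur} plus inequality (\ref{eq:inductiveiso}) with $F(\gamma)\geq 5(\log_2 5-2)\gamma$), the same passage to the larger $i$-section, and the same terminal case of measure exceeding $1-\epsilon$. The only difference is organizational — you package the paper's explicit iteration and telescoping-product bookkeeping as an induction on $n$, with the budget closed at each level by $\gamma+3(\epsilon-F(\gamma))\leq 3\epsilon$.
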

\begin{proof}
Let \(c\) be the constant in Lemma \ref{lemma:case1mustoccur}. Let \(A \subset \{0,1\}^n\) be such that
\[|\partial A| \leq |A| \log_{2}(2^{n}/|A|) + \epsilon |A|,\]
for some \(\epsilon \leq c\). Let \(\epsilon_0 \leq \epsilon\) be such that
\[|\partial A| = |A| (\log_{2}(2^{n}/|A|)+\epsilon_0).\]
By Lemma \ref{lemma:case1mustoccur}, there exists \(i \in [n]\) with case 1 occurring, i.e. with
\[\gamma_{i} \leq \epsilon/(5(\log_{2}5-2)).\]
Without loss of generality, we may assume that \(i = n\), and that \(|A_{n}^{+}| \leq |A_{n}^{-}|\). In keeping with our earlier notation, we write \(\gamma = \gamma_n = |A_{n}^{+}|/|A|\).

To avoid confusion, we now write \(B^{(0)} = A,\ p^{(0)} = p,\ \epsilon^{(0)} = \epsilon_{0}\), and \(\gamma^{(0)} = \gamma\). Let \(B^{(1)} = A_{n}^{-} \subset \mathcal{P}([n-1])\), let \(p^{(1)} = p^{-}_{n}\), and let \(\epsilon^{(1)} = \epsilon_{n}^{-}\).

By (\ref{eq:inductiveiso}), we have
\[(1-\gamma^{(0)}) \epsilon^{(1)} + F(\gamma^{(0)}) \leq \epsilon^{(0)}.\]
Since \(F(\gamma^{(0)}) \geq 5(\log_{2}5-2) \gamma^{(0)}\), we have
\[(1-\gamma^{(0)}) \epsilon^{(1)} + 5(\log_{2}5-2) \gamma^{(0)} \leq \epsilon^{(0)};\]
it follows that \(\epsilon^{(1)} \leq \epsilon \leq c\). Hence, \(B^{(1)} \subset \mathcal{P}([n-1])\) also satisfies the hypothesis of Theorem \ref{thm:stability} (with \(n\) replaced by \(n-1\)). Its measure \(p^{(1)}\) satisfies
\begin{eqnarray*}
p^{(1)} & = & 2(1-\gamma^{(0)})p^{(0)}\\
& \geq & 2\left(1-\frac{\epsilon^{(0)}}{5(\log_{2}5-2)}\right)p^{(0)}\\
& > & 2(1-\epsilon^{(0)})p^{(0)}\\
& \geq & 2(1-c)p^{(0)}.
\end{eqnarray*}

Repeat the same argument for \(B^{(1)}\). We obtain a sequence of set systems \((B^{(k)})\) on ground sets of sizes \(n-k\), satisfying the hypotheses of Theorem \ref{thm:stability} with \(\epsilon\) replaced by \(\epsilon^{(k)} \leq \epsilon_{0} \leq c\), with measures \(p^{(k)}\) satisfying
\[p^{(k+1)} > 2(1-\epsilon^{(k)})p^{(k)}\quad \forall k \geq 0,\]
and with
\begin{equation}
 \label{eq:inductiveineq}
(1-\gamma^{(k)}) \epsilon^{(k+1)} + F(\gamma^{(k)}) \leq \epsilon^{(k)} \quad \forall k \geq 0.
\end{equation}
Without loss of generality, we may assume that \(B^{(k)} \subset \mathcal{P}([n-k])\).

We may continue this process until we produce a set system \(B^{(N)}\) at stage \(N\), for which \(p^{(N)} > 1-\epsilon_{0}\), at which point we can no longer apply Lemma \ref{lemma:case1mustoccur}. We must now show that \(A\) is close to \(\mathcal{P}([n-N])\). Observe that
\begin{eqnarray*}
|A \setminus B^{(N)}| & = & \sum_{k=0}^{N-1} \gamma^{(k)} p^{(k)} 2^{n-k}\\
& = & \sum_{k=0}^{N-1} 2^{k} \left(\prod_{j < k} (1-\gamma^{(j)})\right)\gamma^{(k)} p_{0}2^{n-k} \\
& = & \sum_{k=0}^{N-1} \left(\prod_{j < k} (1-\gamma^{(j)})\right)\gamma^{(k)} p_{0}2^{n}\\
& = & \sum_{k=0}^{N-1} \left(\prod_{j < k}(1-\gamma^{(j)})\right)\gamma^{(k)} |A|.
 \end{eqnarray*}
By repeatedly applying the inequality (\ref{eq:inductiveineq}), we obtain
\[\sum_{k=0}^{N-1} \left(\prod_{j < k}(1-\gamma^{(j)})\right)F(\gamma^{(k)}) +\left(\prod_{j =0}^{N-1}(1-\gamma^{(j)})\right)\epsilon_{N} \leq \epsilon_{0},\]
so certainly,
\[\sum_{k=0}^{N-1}  \left(\prod_{j < k}(1-\gamma^{(j)})\right)F(\gamma^{(k)}) \leq \epsilon_{0}.\]
Since \(F(\gamma^{(k)}) \geq 5(\log_{2}5-2)\gamma^{(k)}\ (0 \leq k \leq N-1)\), it follows that
\[\sum_{k=0}^{N-1}  \left(\prod_{j < k}(1-\gamma^{(j)})\right)\gamma^{(k)} \leq \frac{\epsilon_{0}}{5(\log_{2}5-2)}.\]
Hence,
\[|A \setminus B^{(N)}| \leq \frac{\epsilon_{0}}{5(\log_{2}5-2)}|A| < \epsilon_{0} |A|.\]
Let \(C = \mathcal{P}([n-N])\), a codimension-\(N\) subcube. Then
\begin{equation}
 \label{eq:AminusCsmall}
|A \setminus C| = |A \setminus B^{(N)}| < \epsilon_{0} |A|.
\end{equation}
Since \(p^{(N)} > 1-\epsilon_{0}\), we have
\begin{equation}
 \label{eq:CminusAsmall}
|C \setminus A| < \epsilon_{0} |C|.
\end{equation}
Hence,
\[|C| < \frac{1}{1-\epsilon_{0}}|A|,\]
and therefore
\[|C \setminus A| < \frac{\epsilon_{0}}{1-\epsilon_{0}}|A| < 2 \epsilon_{0} |A|.\]
Combining this with (\ref{eq:AminusCsmall}) yields:
\begin{equation}
\label{eq:AdifferenceCsmallA}
|A \Delta C| < 3\epsilon_{0}|A|,
\end{equation}
proving Theorem \ref{thm:stability}.
\end{proof}

We may use this rough stability result to obtain a more precise one:

\begin{theorem}
\label{thm:moreprecisestability}
There exists an absolute constant \(c>0\) such that if \(A \subset \{0,1\}^{n}\) with
\[|\partial A| \leq |A| \log_{2}(2^{n}/|A|) + \epsilon |A|,\]
for some \(\epsilon \leq c\), then
\[|A \Delta C| < \delta_{0}(\epsilon) |A|\]
for some subcube \(C\), where \(\delta_{0}(\epsilon)\) is the smallest positive solution of
\[x\log_{2}(1/x) - 3x = \epsilon.\]
\end{theorem}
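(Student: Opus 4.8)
The plan is to feed the rough subcube $C$ produced by Theorem~\ref{thm:stability} back into the edge-isoperimetric inequality: once we know $A$ is already very close to $C$, we can squeeze out a much sharper lower bound on $|\partial A|$ than the crude $|A|\log_2(2^n/|A|)$, and comparing this with the hypothesis will pin down $|A\Delta C|$. Concretely, by Theorem~\ref{thm:stability} there is a subcube $C$ with $\mu := |A\Delta C|/|A| < 3\epsilon$, which is as small as we wish once $c$ is small. The key intermediate claim is a \emph{reverse} isoperimetric estimate near a subcube: if $C$ is any subcube with $\mu := |A\Delta C|/|A|$ sufficiently small, then
\[|\partial A| \;\geq\; |A|\bigl(\log_{2}(2^{n}/|A|) + \mu\log_{2}(1/\mu) - 3\mu\bigr).\]
Granting this, the hypothesis $|\partial A| \leq |A|(\log_{2}(2^{n}/|A|)+\epsilon)$ forces $\mu\log_{2}(1/\mu)-3\mu \leq \epsilon$; since $\mu < 3\epsilon \leq 3c$ lies in the range where $x\mapsto x\log_{2}(1/x)-3x$ is strictly increasing (for $c$ below an absolute constant) and vanishes at $0$, and $\delta_{0}(\epsilon)$ is by definition its smallest positive root at height $\epsilon$, we obtain $\mu \leq \delta_{0}(\epsilon)$; strictness will follow from the slack in the reverse estimate, since I expect to prove it with $-2\mu$ (indeed $-(\log_2 e + o(1))\mu$) in place of $-3\mu$.

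To prove the reverse estimate, write $C$ as a subcube of codimension $d$, so $|C|=2^{n-d}$, and set $a=|A\setminus C|$, $b=|C\setminus A|$, so $a+b=\mu|A|$ and $|A|=|C|+a-b$. Partition $\partial A$ into the edges lying inside $C$, the edges lying inside $\bar C := \{0,1\}^{n}\setminus C$, and the edges between $C$ and $\bar C$. For the first family, applying the edge-isoperimetric inequality~(\ref{eq:edgeiso}) inside the subcube $C$ (a copy of $Q_{n-d}$) to the set $C\setminus A$ gives at least $b\log_{2}(2^{n-d}/b)$ such boundary edges. For the second, every vertex of $A\setminus C$ has at most one neighbour in $C$, so the edges inside $\bar C$ incident to $A\setminus C$ number at least $|\partial(A\setminus C)|-a \geq a\log_{2}(2^{n}/a)-a$, again by~(\ref{eq:edgeiso}). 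For the third, every vertex of $A\cap C$ has exactly $d$ neighbours in $\bar C$, and at most $a$ of all such edges can land in $A\setminus C$, so at least $d|A\cap C|-a = d(|C|-b)-a$ of them run from $A$ into $\bar C\setminus A$. Summing these three disjoint families and subtracting the second-order Taylor expansion of $|A|\log_{2}(2^{n}/|A|) = (|C|+a-b)\log_{2}(2^{d}/(1+(a-b)/|C|))$, the terms $d|C|$ cancel and so do the $d(a-b)$ terms, leaving $b\log_{2}(2^{n-d}/b)+a\log_{2}(2^{n-d}/a)$ minus a quantity which is at most $2\mu|A|$ (in fact $(\log_2 e + o(1))\mu|A|$), coming from the interface edges and the discrepancy $|A|\neq|C|$; the neglected second-order term from the expansion has the helpful sign. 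Finally, since $a/2^{n-d}$ and $b/2^{n-d}$ are both at most $\mu/(1-\mu)$, we have $b\log_{2}(2^{n-d}/b)+a\log_{2}(2^{n-d}/a) \geq (a+b)\log_{2}(1/\mu) - O(\mu)(a+b) = \mu|A|\log_{2}(1/\mu) - O(\mu^{2})|A|$, which gives the claimed inequality. (The degenerate case $d=0$, where $a=0$, reduces this to~(\ref{eq:edgeiso}) applied to $A^{c}$, and needs no separate treatment.)

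The main obstacle is securing the reverse estimate with a constant that is genuinely better than $3$: one must account precisely for the boundary edges running between $C$ and $\bar C$ and between $A\cap C$ and $A\setminus C$, together with the error incurred because $|A|$ differs from $|C|$, and verify that the total of these costs only $O(\mu)\cdot|A|$ with a constant strictly below $3$, so that the ``$-3\mu$'' defining $\delta_{0}$ is not exhausted and the final inequality is strict. An alternative route, which avoids isolating a separate lemma, is to re-run the inductive peeling argument from the proof of Theorem~\ref{thm:stability}, but replace the wasteful bound $F(\gamma)\geq 5(\log_{2}5-2)\gamma$ by the near-optimal estimate $F(\gamma)\geq\gamma\log_{2}(1/\gamma)-3\gamma$ (valid for small $\gamma$, since $F(\gamma)=\gamma\log_2(1/\gamma)+\gamma\log_2 e - 2\gamma - O(\gamma^2)$), simultaneously sharpen the bound on $|C\setminus A|$ by applying~(\ref{eq:edgeiso}) to the complement of the final section $B^{(N)}$, and combine the two contributions using the subadditivity of $x\mapsto x\log_{2}(1/x)-3x$ near $0$; this is essentially the same computation rearranged.
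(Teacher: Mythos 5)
Your proposal is correct and follows essentially the same route as the paper: apply Theorem \ref{thm:stability} to get a subcube $C$ with $|A\Delta C|<3\epsilon|A|$, then lower-bound $|\partial A|$ by splitting its edges into exactly the same three families (edges inside $C$, edges from $A\cap C$ across the fixed directions, and edges incident to $A\setminus C$ outside $C$), which yields $\mu\log_2(1/\mu)-3\mu\le\epsilon$ and hence $\mu<\delta_0(\epsilon)$ by monotonicity. The only cosmetic difference is that you bound $a\log_2(2^{n-d}/a)+b\log_2(2^{n-d}/b)$ termwise using $a,b\le\mu|C|/(1-\mu)$, where the paper invokes the superadditivity of $x\mapsto x\log_2(1/x)$.
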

\begin{proof}
Write
\begin{equation}
 \label{eq:almostisoperimetric2}
|\partial A| = |A| (\log_{2}(2^{n}/|A|) +\epsilon_{0}),
\end{equation}
where \(0 \leq \epsilon_{0} \leq \epsilon\). Choose a subcube \(C\) such that \(|A \Delta C|\) is minimal, and let \(\delta = |A \Delta C|/|A|\). By Theorem \ref{thm:stability}, \(\delta < 3 \epsilon_{0} \leq 3c < 1/2\). 

Without loss of generality, we may assume that \(C = \mathcal{P}([n-N])\). Let \(B = C \setminus A\) and let \(D = A \setminus C\); then
\[|B|+|D| < 3\epsilon_{0} |A|.\]
Since every point of \(D\) is adjacent to at most one point of \(C\), the number of edges in \(\partial A\) between points of \(A \cap C\) and points of \(\{0,1\}^n \setminus C\) is at least
\[N(2^{n-N}-|B|) - |D|.\]
The number of edges in \(\partial A\) between points of \(C\) is at least
\[|B| \log_{2}(2^{n-N}/|B|).\]
Finally, the number of edges of the cube in \(\partial D\) is at least
\[|D|\log_{2}(2^{n}/|D|),\]
and the number of edges of the cube between points in \(D\) and points in \(C\) is at most \(|D|\), so the number of edges of the cube between points of \(D\) and points of \((\{0,1\}^{n} \setminus C)\setminus A\) is at least
\[|D|(\log_{2}(2^{n}/|D|)-1).\]
It follows that
\begin{eqnarray}
\label{eq:tightbound}
|\partial A| & \geq & N(2^{n-N}-|B|) - |D| + |B| \log_{2}(2^{n-N}/|B|) +|D|(\log_{2}(2^{n}/|D|)-1) \nonumber \\
& = & N2^{n-N} + (\log_{2}(2^{n-N}/|B|)-N)|B| +(\log_{2}(2^{n}/|D|)-2)|D|\nonumber \\
& = & N(|A|-|D|+|B|) + (\log_{2}(2^{n-N}/|B|)-N)|B| \nonumber \\
&&+(\log_{2}(2^{n}/|D|)-2)|D| \nonumber \\
& = & N|A| +|B|(\log_{2}(2^{n}/|B|)-N)+|D|(\log_{2}(2^{n}/|D|)-N-2).
\end{eqnarray}
Write \(|B| = \phi |A|\) and \(|D| = \psi|A|\). Then \(\delta = \psi+\phi\). Note that
\[N = \log_{2}\left(\frac{2^{n}}{|A|-|D|+|B|}\right) = \log_{2}\left(\frac{2^{n}}{|A|}\right) - \log_{2} (1-\psi+\phi).\]
Hence, we obtain:
\begin{eqnarray*}
|\partial A| & \geq & |A|\log_{2}(2^{n}/|A|) - |A| \log_{2}(1-\psi+\phi) \\
&& + \phi |A| (\log_{2}(1/\phi) + \log_{2}(1-\psi+\phi))\\
&& + \psi |A| (\log_{2}(1/\psi)-2+\log_{2}(1-\psi+\phi))\\
& = & |A|\log_{2}(2^{n}/|A|) +\\
&& |A|(\phi \log_{2}(1/\phi)+\psi\log_{2}(1/\psi)-2\psi+(\psi+\phi-1)\log_{2}(1-\psi+\phi)) \\
& > & |A|\log_{2}(2^{n}/|A|) + |A|(\psi\log_{2}(1/\psi) + \phi \log_{2}(1/\phi) -3\psi - 3\phi),
\end{eqnarray*}
where the last inequality follows from the fact that \(\psi,\phi < 1/2\).
Observe that the function 
\begin{eqnarray*}
h: (0,1] & \to & \mathbb{R};\\
x & \mapsto & x \log_{2}(1/x)\\
\end{eqnarray*}
is concave, and therefore
\[\psi\log_{2}(1/\psi) + \phi \log_{2}(1/\phi) \geq (\psi+\phi)\log_{2}(1/(\psi+\phi)).\]
We obtain:
\[|\partial A| > |A|\log_{2}(2^{n}/|A|) + |A|((\psi+\phi)\log_{2}(1/(\psi+\phi)) - 3(\psi + \phi)).\]
Hence, by (\ref{eq:almostisoperimetric2}),
\[(\psi+\phi)\log_{2}(1/(\psi+\phi)) -3(\psi + \phi) < \epsilon_{0},\]
i.e.,
\[\delta (\log_{2}(1/\delta)-3) < \epsilon_{0}.\]
It is easy to check that the function
\begin{eqnarray*}
g: (0,1] & \to & \mathbb{R};\\
x & \mapsto & x \log_{2}(1/x) - 3x\\
\end{eqnarray*}
is strictly increasing between \(0\) and \(2^{-(3+1/\ln(2))}\); provided \(3c \leq 2^{-(3+1/\ln(2))}\), it follows that \(\delta < \delta_{0}(\epsilon)\), where \(\delta_{0}(\epsilon)\) is the smallest positive solution of
\[x\log_{2}(1/x) - 3x = \epsilon,\]
proving Theorem \ref{thm:moreprecisestability}.
\end{proof}

\begin{remark}
Observe that
\[\delta_{0}(\epsilon) = (1+O(1/\log_{2}(1/\epsilon)))\frac{\epsilon}{\log_{2}(1/\epsilon)} \leq \frac{2\epsilon}{\log_{2}(1/\epsilon)}.\]
\end{remark}

Similarly, we may obtain an exact stability result for set systems whose size is a power of 2:

\begin{theorem}
 \label{thm:exactstabilitysubcubes}
There exists an absolute constant \(c>0\) such that if \(A \subset \{0,1\}^{n}\) with size \(|A| = 2^{n-N}\) for some \(N \in \mathbb{N}\), and with edge-boundary
\[|\partial A| \leq |A| \log_{2}(2^{n}/|A|) + \epsilon |A|,\]
where \(\epsilon \leq c\), then there exists a codimension-\(N\) subcube \(C\) such that
\[|A \Delta C| \leq \delta_{1}(\epsilon)|A|,\]
where \(\delta_{1}(\epsilon)\) is the unique root of the equation
\[x \log_{2}(1/x) = \epsilon\]
in \((0,1/e)\).
\end{theorem}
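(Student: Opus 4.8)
The plan is to follow the proof of Theorem~\ref{thm:moreprecisestability} almost verbatim, using the extra rigidity that comes from $|A| = 2^{n-N}$ being a power of $2$ to kill the lower-order correction term. First I would define $\epsilon_0 \le \epsilon$ by $|\partial A| = |A|(\log_2(2^n/|A|) + \epsilon_0)$ and apply Theorem~\ref{thm:stability} to get a subcube $C$ with $|A \Delta C| < 3\epsilon_0 |A| \le 3c|A|$. The first new observation is that, provided $c$ is small enough (say $c < 1/(3e)$), this subcube must have codimension exactly $N$: since $\bigl|\,|C| - |A|\,\bigr| \le |A \Delta C| < 3\epsilon_0 |A|$, the ratio $|C|/|A|$ lies in $(1 - 3\epsilon_0,\, 1 + 3\epsilon_0) \subset (1/2, 2)$, and as both $|C|$ and $|A|$ are powers of $2$ this forces $|C| = |A| = 2^{n-N}$. (If $\epsilon_0 = 0$, equivalently $A$ is itself a subcube, the conclusion is trivial, so I may assume $\epsilon_0 > 0$ and hence $|A| \ge 2$.)

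Second, because $|C| = |A|$, we get $|C \setminus A| = |A \setminus C|$ exactly; writing $B = C \setminus A$, $D = A \setminus C$ and $\delta = |A \Delta C|/|A|$, this means $|B| = |D| = (\delta/2)|A|$. I would now run the identical edge-counting argument of Theorem~\ref{thm:moreprecisestability} — splitting $\partial A$ into edges leaving $C$ from $A \cap C$, edges inside $C$, and edges incident to $D$ — to reach inequality~(\ref{eq:tightbound}):
\[|\partial A| \ge N|A| + |B|\bigl(\log_2(2^n/|B|) - N\bigr) + |D|\bigl(\log_2(2^n/|D|) - N - 2\bigr).\]
The crucial simplification is that here $N = \log_2(2^n/|A|)$ \emph{on the nose}: the term $\log_2(1 - \psi + \phi)$ appearing in the general case is $0$ since $|B| = |D|$. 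Substituting $|B| = |D| = (\delta/2)|A|$ and using $\log_2(2^n/|B|) - N = \log_2(|A|/|B|) = \log_2(2/\delta)$, the right-hand side becomes
\[|A|\log_2(2^n/|A|) + |A|\Bigl[(\delta/2)\log_2(2/\delta) + (\delta/2)\bigl(\log_2(2/\delta) - 2\bigr)\Bigr] = |A|\log_2(2^n/|A|) + \delta |A|\log_2(1/\delta),\]
the bracket collapsing via $\delta\log_2(2/\delta) - \delta = \delta\log_2(1/\delta)$.

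Comparing this with $|\partial A| = |A|(\log_2(2^n/|A|) + \epsilon_0)$ gives $\delta\log_2(1/\delta) \le \epsilon_0 \le \epsilon$. Since $\delta < 3\epsilon_0 \le 3c < 1/e$ and $x \mapsto x\log_2(1/x)$ is strictly increasing on $(0, 1/e)$ (with maximum $1/(e\ln 2) > c$, so $\delta_1(\epsilon)$ exists and is unique in $(0,1/e)$), we conclude $\delta \le \delta_1(\epsilon)$, i.e.\ $|A \Delta C| \le \delta_1(\epsilon)|A|$, which is the theorem. I do not expect a genuine obstacle: everything is the counting already carried out for Theorem~\ref{thm:moreprecisestability}, plus the power-of-$2$ pinning of the codimension; the only point requiring mild care is the choice of the absolute constant $c$ — it must be at most the constant of Theorem~\ref{thm:stability}, small enough to force $|C| = |A|$, and small enough to keep $\delta$ inside the increasing range $(0, 1/e)$ of $x\log_2(1/x)$ — and verifying that the elementary function-theoretic facts about $x\log_2(1/x)$ used above hold on that range.
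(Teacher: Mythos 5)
Your proposal is correct and follows essentially the same route as the paper: pin the codimension of the approximating subcube to $N$ using the power-of-$2$ rigidity, set $|B|=|D|$ in inequality~(\ref{eq:tightbound}) so the correction term collapses to $|A|\delta\log_2(1/\delta)$, and conclude by monotonicity of $x\log_2(1/x)$ on $(0,1/e)$. The only cosmetic difference is that the paper takes $C$ to minimise $|A\Delta C|$ before invoking Theorem~\ref{thm:stability}, which changes nothing in the argument.
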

\begin{proof}
Write
\begin{equation}
 \label{eq:almostisoperimetric3}
|\partial A| = |A| (\log_{2}(2^{n}/|A|) +\epsilon_{0}).
\end{equation}
where \(0 \leq \epsilon_{0} \leq \epsilon\). Choose a subcube \(C\) such that \(|A \Delta C|\) is minimal, and let \(\delta = |A \Delta C|/|A|\). By Theorem \ref{thm:stability}, \(\delta < 3 \epsilon_{0} \leq 3c < 1/2\).

Suppose \(C\) has codimension \(N'\). Note that if \(N \neq N'\), then \(|A|\) and \(|C|\) would differ by a factor of at least 2, so
\[|A \Delta C| / |A| \geq ||A|-|C||/|A| \geq 1/2,\]
a contradiction. Hence, \(N' = N\), i.e. \(|C|=|A|\).

Let \(B = C \setminus A\); then \(|A  \setminus C| = |C \setminus A| = |B|\). From (\ref{eq:tightbound}), we have
\begin{eqnarray*}
\label{eq:tightboundpowers}
|\partial A| & \geq & |A|\log_{2}(2^{n}/|A|) +|B|(\log_{2}(2^{n}/|B|)-N)+|B|(\log_{2}(2^{n}/|B|)-N-2) \\
& = & |A|\log_{2}(2^{n}/|A|) +2|B|\log_{2}(2^{n}/|B|)-2|B|\log_{2}(2^{n}/|A|)-2|B| \\
& = & |A| \log_{2}(2^{n}/|A|) + |A|\delta \log_{2}(1/\delta).
\end{eqnarray*}
It follows that
\[\delta \log_{2}(1/\delta) \leq \epsilon.\]
Observe that the function 
\begin{eqnarray*}
h: (0,1] & \to & \mathbb{R};\\
x & \mapsto & x \log_{2}(1/x)\\
\end{eqnarray*}
has
\[h'(x) = -\frac{1}{\ln 2}(1+\ln x)\]
and is therefore strictly increasing between \(0\) and \(1/e\), where it attains its maximum of \(1/(e\ln2)\), and strictly decreasing between \(1/e\) and \(1\). Since \(\delta < 3\epsilon \leq 3c < 1/e\), it follows that \(\delta \leq \delta_{1}(\epsilon)\), where \(\delta_{1}(\epsilon)\) is the unique root of the equation
\[x \log_{2}(1/x) = \epsilon\]
in \((0,1/e)\), proving the theorem.
\end{proof}
The following is an immediate consequence of Theorem \ref{thm:exactstabilitysubcubes}:
\begin{corollary}
\label{corr:powersof2}
If \(A \subset \{0,1\}^{n}\) has size \(2^{t}\) for some \(t \in \mathbb{N}\), and cannot be made into a subcube by fewer than \(\delta |A|\) additions and deletions, then its edge-boundary satisfies
\[|\partial A| \geq |A| \log_{2}(2^{n}/|A|) + |A| \max\{\delta \log_{2}(1/\delta),c\} = 2^{t}(n-t+\max\{\delta \log_{2}(1/\delta),c\}),\]
where \(c > 0\) is an absolute constant. There exists an absolute constant \(c'>0\) such that if \(\delta \leq c'\), then
\[|\partial A| \geq |A| \log_{2}(2^{n}/|A|) + |A| \delta \log_{2}(1/\delta) = 2^{t}(n-t+\delta \log_{2}(1/\delta)).\]
\end{corollary}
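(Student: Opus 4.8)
The plan is to read off the two displayed inequalities as the contrapositive of Theorem~\ref{thm:exactstabilitysubcubes}. Let $c$ be the absolute constant of Theorem~\ref{thm:exactstabilitysubcubes}, put $N = n-t$ so that $|A| = 2^{n-N}$, and write $|\partial A| = |A|\bigl(\log_2(2^n/|A|) + \epsilon_0\bigr)$, where $\epsilon_0 \geq 0$ by the edge-isoperimetric inequality~(\ref{eq:edgeiso}). We may assume $\delta > 0$, i.e.\ that $A$ is not a subcube, as otherwise both bounds reduce to~(\ref{eq:edgeiso}). I would split on the size of $\epsilon_0$. If $\epsilon_0 > c$ then $|\partial A| > |A|\log_2(2^n/|A|) + c|A|$; and, having chosen the absolute constant $c'$ small enough that $x \mapsto x\log_2(1/x)$ is increasing on $(0,c']$ and at most $c$ there, this already gives $|\partial A| > |A|\log_2(2^n/|A|) + |A|\delta\log_2(1/\delta)$ whenever $\delta \leq c'$. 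If instead $\epsilon_0 \leq c$, then Theorem~\ref{thm:exactstabilitysubcubes} furnishes a codimension-$N$ subcube $C$ with $|A \Delta C| \leq \delta_1(\epsilon_0)|A|$, where $\delta_1(\epsilon_0)$ is the root of $x\log_2(1/x) = \epsilon_0$ in $(0,1/e)$. Since $A$ cannot be turned into a subcube by fewer than $\delta|A|$ additions and deletions, applying this to $C$ forces $\delta \leq \delta_1(\epsilon_0) < 1/e$; and because $h(x) := x\log_2(1/x)$ is strictly increasing on $(0,1/e)$---as recorded in the proof of Theorem~\ref{thm:exactstabilitysubcubes}---we get $\delta\log_2(1/\delta) = h(\delta) \leq h(\delta_1(\epsilon_0)) = \epsilon_0$, that is, $|\partial A| \geq |A|\log_2(2^n/|A|) + |A|\delta\log_2(1/\delta)$. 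Thus, according as $\epsilon_0 > c$ or $\epsilon_0 \leq c$, the edge-boundary exceeds $|A|\log_2(2^n/|A|)$ by more than $c|A|$ or by at least $|A|\delta\log_2(1/\delta)$ respectively, which is the content of the corollary.

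For the accompanying sharpness claim---that $\delta\log_2(1/\delta)$ is best possible when $\delta = 2^{-j}$, say for $1 \leq j \leq t-1$---I would produce an extremal set directly. Let $C$ be a codimension-$N$ subcube; inside the $t$-dimensional cube $C$ pick two disjoint sub-subcubes $\tilde D$ and $B$, each of size $2^{t-j-1}$; let $D$ be the image of $\tilde D$ under flipping one of the coordinates that are fixed for $C$, so that $D$ is a subcube of size $2^{t-j-1}$ lying just outside $C$, every point of which is adjacent to exactly one point of $\tilde D$ and to no other point of $C$, while $B$, being disjoint from $\tilde D$, is adjacent to no point of $D$. Put $A = (C \setminus B) \cup D$. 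Then $|A| = 2^t$ and $|A \Delta C| = |B| + |D| = 2^{t-j} = 2^{-j}|A|$, and a short check over codimensions shows that no subcube is strictly closer to $A$: for $j \geq 1$ any other codimension-$N$ subcube misses at least $2^t - 2^{t-j} \geq 2^{t-j}$ points of $A$, and subcubes of other dimensions differ from $|A|$ in size by a factor at least $2$. Counting $\partial A$ directly---the edges leaving $C$ from points of $A \cap C$, the edges internal to $C$ cut by removing $B$, and the full boundary of $D$, less the one edge per point of $D$ that now lands back in $A \cap C$---one finds $|\partial A| = 2^t(n-t) + j\,2^{t-j} = |A|\log_2(2^n/|A|) + |A|\delta\log_2(1/\delta)$ exactly.

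The lower bound itself is immediate once Theorem~\ref{thm:exactstabilitysubcubes} and the monotonicity of $h$ on $(0,1/e)$ are in hand, so the only real work is the sharpness construction. The delicate point there is that $D$ must be hung off $C$ along a single coordinate, so that each of its $2^{t-j-1}$ points absorbs precisely one edge that the corresponding point of $A \cap C$ would otherwise contribute to the boundary; attaching $D$ away from $C$, or allowing $B$ to meet $D$'s landing region $\tilde D$, inflates $|\partial A|$ by an extra $2^{t-j}$ and breaks the equality with $j\,2^{t-j}$. One also has to confirm carefully---though this is routine---that $C$ really is a subcube nearest to $A$.
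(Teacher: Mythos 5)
Your proposal is correct, and it is exactly the intended argument: the paper offers no proof of this corollary beyond declaring it an immediate consequence of Theorem \ref{thm:exactstabilitysubcubes}, and your case split on whether \(\epsilon_{0} \leq c\), combined with the monotonicity of \(h(x)=x\log_{2}(1/x)\) on \((0,1/e)\), is precisely that deduction; likewise your sharpness construction coincides with the one in the remark following the corollary (your \(\tilde{D}\), \(B\), \(D\) are the paper's \(\mathcal{P}([n-N-M])\), \(B\) and \(D\) with \(M=j+1\)).

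One point to be aware of: what your argument delivers for the first display is \(|\partial A| \geq |A|\log_{2}(2^{n}/|A|) + |A|\min\{\delta\log_{2}(1/\delta),\,c\}\) (equivalently, the disjunction you state), not the \(\max\) printed in the corollary. This is not a defect of your proof: the \(\max\) version is false as literally written --- deleting one point from a codimension-\(1\) subcube and adding an adjacent outside point gives a set with \(\delta>0\) whose boundary exceeds \(|A|\log_{2}(2^{n}/|A|)\) by only \(O(n)\), far below \(c|A|\) --- and it is evidently a typo for \(\min\), as is confirmed by the conclusion section's claim that \(f(2^{-j})=j2^{-j}\) for \(j\) sufficiently large. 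So your proof establishes the (corrected) first display and the second display exactly as the author intended.
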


\begin{remark}
 Observe that all we need from Theorem \ref{thm:stability} to prove Theorem \ref{thm:exactstabilitysubcubes} is that
\[\delta = |A \Delta C|/|A| < 1/e.\]
If we just knew that \(\delta < 1/2\), we could still deduce from the above argument that \(\delta \log_{2}(1/\delta) \leq \epsilon\).
\end{remark}
\begin{remark}
Observe that Theorem \ref{thm:exactstabilitysubcubes} is best possible, apart from the restriction \(\epsilon \leq c\). To see this, let \(C = \mathcal{P}([n-N])\), a codimension-\(N\)-subcube, where \(1 \leq N \leq n-1\). Let \(2 \leq M \leq n-N\), and delete from \(C\) the codimension-\((N+M)\) subcube 
\[B = \{x \cup \{n-N\}:\ x \in \mathcal{P}([n-N-M])\}.\]
Now add on the codimension-\((N+M)\) subcube
\[D = \{x \cup \{n\}:\ x \in \mathcal{P}([n-N-M])\}.\]
The resulting family \(A = (C \setminus B) \cup D\) has
\[|A \Delta C| / |A| = 2^{-(M-1)} \leq 1/2;\]
it is easy to check that all other subcubes \(C' \neq C\) have
\[|A \Delta C'| > |A \Delta C|.\]
Hence,
\[\delta := \min\{|A \Delta C'|:\ C' \textrm{ is a subcube}\}/|A| = |A \Delta C|/|A| = 2^{-(M-1)}.\]
Observe that we have equality in (\ref{eq:tightbound}) for \(A\), and therefore
\[|\partial A| = |A|\log_{2}(2^{n}/|A|)+|A|\delta\log_{2}(1/\delta).\]
\end{remark}

\section{Conclusion and Open Problems}
Consider the function
\begin{eqnarray*}
f(\delta) & = & \textrm{inf} \{\frac{|\partial A|-|A|\log_{2}(2^{n}/|A|)}{|A|}:\ n \in \mathbb{N},\ A \subset \{0,1\}^{n},\\ &&|A| \textrm{ is a power of }2,\ |A \Delta C| \geq \delta |A|\ \textrm{for all subcubes }C\}.
\end{eqnarray*}
We have shown that \(f(\delta) = \max(\delta \log_{2}(1/\delta),c)\) when \(\delta = 1/2^{j}\) for some \(j \in \mathbb{N}\), where \(c >0\) is an absolute constant, implying that \(f(2^{-j}) = j 2^{-j}\) for \(j \in \mathbb{N}\) sufficiently large. We conjecture that the restriction on \(j\) could be removed:
\begin{conjecture}
\label{conj:allj}
For any \(j \in \mathbb{N}\),
\[f(2^{-j}) = j 2^{-j}.\]
\end{conjecture}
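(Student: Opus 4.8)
\medskip
\noindent\textbf{Proof proposal for Conjecture~\ref{conj:allj}.}
The bound $f(2^{-j}) \le j2^{-j}$ holds for every $j \ge 1$: taking $M = j+1$ (so that $2^{-(M-1)} = 2^{-j}$) in the construction of the final remark produces, for each $N \ge 1$ and each $n \ge N+M$, a set $A \subset \{0,1\}^n$ of power-of-two size with $\min_{C}|A \Delta C|/|A| = 2^{-j}$ and $|\partial A| = |A|\log_2(2^n/|A|) + j2^{-j}|A|$. The matching lower bound $f(2^{-j}) \ge j2^{-j}$ is, moreover, already contained in Corollary~\ref{corr:powersof2} whenever $j2^{-j} \le c$, since then $\max\{\delta\log_2(1/\delta),c\} \ge c \ge j2^{-j}$. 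As $j2^{-j} > c$ for only finitely many $j$, say $j \in \{1,\ldots,j_0\}$ with $j_0$ an absolute constant, the conjecture reduces to proving $f(2^{-j}) \ge j2^{-j}$ for these finitely many small values of $j$, for which the excess of $A$ may be as large as $1/2$.

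In fact the conjecture reduces to its case $j=1$, equivalently to the single statement $(\star)$: \emph{every $A \subset \{0,1\}^n$ of power-of-two size with $|\partial A| < |A|\log_2(2^n/|A|) + \tfrac12|A|$ satisfies $|A\Delta C|/|A| < 1/2$ for some subcube $C$}. Indeed $(\star)$ is precisely the assertion $f(1/2) \ge 1/2$. Granting $(\star)$, fix $j \ge 1$ and let $A$ have $|A| = 2^{n-N}$, $\delta := \min_C |A\Delta C|/|A| \ge 2^{-j}$, and excess $\epsilon_0 := (|\partial A| - |A|\log_2(2^n/|A|))/|A|$. If $\epsilon_0 \ge 1/2$ then $\epsilon_0 \ge j2^{-j}$ and we are done; otherwise $\epsilon_0 < 1/2$, so $(\star)$ gives $\delta < 1/2$. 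For a power-of-two set, $\delta < 1/2$ forces the nearest subcube to have codimension exactly $N$ (a subcube of any other codimension differs from $A$ in at least $|A|/2$ points), so the estimate~(\ref{eq:tightbound}) applies and yields $\epsilon_0 \ge h(\delta)$, where $h(x) = x\log_2(1/x)$. Since $h$ is unimodal on $(0,1)$ with maximum at $1/e$ and $h(2^{-j}) = j2^{-j} \le 1/2 = h(1/2)$, the infimum of $h$ over $[2^{-j},1/2)$ is $j2^{-j}$ (attained at $x=2^{-j}$ when $j \ge 2$; for $j=1$ the interval is empty, so $(\star)$ already contradicts $\delta \ge 1/2$). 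Hence $\epsilon_0 \ge j2^{-j}$, and combined with the construction, $f(2^{-j}) = j2^{-j}$.

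It therefore remains only to prove $(\star)$, and this is the crux. $(\star)$ is a rough stability statement of the type of Theorem~\ref{thm:stability}, but with the excess allowed to be as large as $1/2$, whereas the present proof of Theorem~\ref{thm:stability} works only for excess below the (very small) absolute constant $c$. The bottleneck is Lemma~\ref{lemma:case1mustoccur}, whose proof invokes Talagrand's inequality (Theorem~\ref{thm:talagrand}) through the requirement $\tfrac K8\log_2(1/6\epsilon) \ge 2(1+\epsilon)$; this is vacuous — the left-hand side is negative — once $\epsilon \ge 1/6$, so no improvement of Talagrand's constant can push the influence-based argument to excess near $1/2$. The plan is to replace the influence-based step, in the moderate-excess regime, by a direct combinatorial argument. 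Two promising routes: (i) iterate the section recursion~(\ref{eq:indbound}) using the \emph{exact} edge-isoperimetric inequality of Harper, Lindsey, Bernstein and Hart in place of its weak form~(\ref{eq:edgeiso}), showing that if no coordinate of $A$ had a tiny section then descending through the cube would eventually violate the sharp inequality; and (ii) apply coordinate-wise compressions, which do not increase $|\partial A|$, to reduce $(\star)$ to initial-segment sets $C_{n,|A|}$, whose structure is explicit, while verifying that compression cannot bring $\delta$ below $1/2$. One would also need to replace the factor $3$ in Theorem~\ref{thm:stability} by $1$ in this range — or argue separately for excess in $[1/6,1/2)$ — so that the conclusion is genuinely $\delta < 1/2$. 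Establishing $(\star)$, a tight rough-stability result at the optimal excess threshold $1/2$ and beyond the reach of the KKL/Talagrand machinery used here, is the main obstacle.
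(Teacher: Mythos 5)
This statement is an open conjecture of the paper, not a theorem: the paper offers no proof, and your proposal does not supply one either. What you do establish is correct and worth recording. The upper bound \(f(2^{-j}) \le j2^{-j}\) via the construction of the final remark with \(M=j+1\) is right. Your reduction of the general case to the case \(j=1\) is also sound: granting \((\star)\), the case \(\epsilon_0 \ge 1/2\) is trivial because \(j2^{-j}\le 1/2\) for all \(j\ge 1\); otherwise \((\star)\) gives \(\delta<1/2\), the minimal subcube must then have codimension \(N\), the bound (\ref{eq:tightbound}) yields \(\epsilon_0 \ge \delta\log_2(1/\delta)\) exactly as in the paper's Remark 2, and unimodality of \(x\mapsto x\log_2(1/x)\) with value \(1/2\) at \(x=1/2\) gives \(\inf_{[2^{-j},1/2)}x\log_2(1/x)=j2^{-j}\). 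This equivalence of the conjecture with its \(j=1\) instance is a genuine, if modest, sharpening of Remark 2. One correction: you justify the lower bound for large \(j\) by the displayed \(\max\{\delta\log_2(1/\delta),c\}\) in Corollary \ref{corr:powersof2}; read literally that display would give \(f(\delta)\ge c\) for every \(\delta>0\), which is contradicted by the construction itself (whose excess \(j2^{-j}\) tends to \(0\)), so it must be a misprint for \(\min\). The usable statement is the corollary's second sentence (\(\delta\le c'\) implies excess at least \(\delta\log_2(1/\delta)\)), which still covers all but finitely many \(j\), as you need.

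The gap is \((\star)\) itself, and it is the entire content of the conjecture. Your diagnosis of why the paper's method stops is accurate: the Talagrand step in Lemma \ref{lemma:case1mustoccur} requires \(\log_2(1/(6\epsilon))>0\) and so is vacuous for \(\epsilon\ge 1/6\), whatever the constant \(K\). In fact the obstruction appears even earlier: the function \(F\) of Lemma \ref{lemma:cases} has maximum \(\log_2 5-2\approx 0.32\), so once \(\epsilon_0\ge\log_2 5-2\) the constraint \(F(\gamma_i)\le\epsilon_0\) excludes nothing and the small/large dichotomy on which the whole recursion rests disappears. Neither of your proposed routes is developed far enough to judge: route (i) does not say how the exact Harper--Lindsey--Bernstein--Hart bound would restore a usable dichotomy in this regime; route (ii) founders on the standard difficulty that compressions can only decrease the distance to a subcube (indeed, for \(|A|\) a power of \(2\) they compress \(A\) all the way to a subcube), so controlling \(\delta\) for the compressed set says nothing about \(\delta\) for \(A\) without a new idea tracking the trade-off between boundary loss and displacement at each compression step. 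So the proposal is a correct reduction plus an accurate description of the obstruction, but the conjecture remains unproved.
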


As observed above, the function
\begin{eqnarray*}
h: (0,1] & \to & \mathbb{R};\\
x & \mapsto & x \log_{2}(1/x)\\
\end{eqnarray*}
is strictly decreasing between \(1/e\) and \(1\), whereas \(f\) is clearly an non-decreasing function of \(\delta\). It would be interesting to determine the behaviour of \(f(\delta)\) for \(1/2 < \delta \leq 1\).

We also conjecture that Talagrand's Theorem (Theorem \ref{thm:talagrand}) holds with \(K=2\). This was independently conjectured by Samorodnitsky \cite{alextalk}. It would be best possible, as can be seen by taking \(A\) to be a \(t\)-dimensional subcube; then \(n-t\) influences are \(2^{-(n-t-1)}\), and the rest are zero, so
\[\sum_{i=0}^{n} \beta_{i} / \log_{2}(1/\beta_{i}) = \frac{(n-t)2^{-(n-t-1)}}{n-t-1}.\]
Hence,
\[\frac{1}{p(1-p)}\sum_{i=0}^{n} \beta_{i} / \log_{2}(1/\beta_{i}) = \frac{2(n-t)}{(n-t-1)(1-2^{-(n-t)})} \to 2 \quad \textrm{as } n \to \infty.\]

Knowing this would obviously weaken the upper bound on \(\epsilon\) required to prove Theorem \ref{thm:stability}, though it would not result in a proof of Conjecture \ref{conj:allj}.\\

It would be interesting to determine the structure of subsets \(A \subset \{0,1\}^{n}\) satisfying
\begin{equation} \label{eq:constant-factor} |\partial A| \leq L |A| \log_{2}(2^{n}/|A|)\end{equation}
for \(L\) a fixed positive constant. It is easy to check that 
$$ k \log_2(2^n/k) \leq |\partial C_{n,k}| \leq 2k \log_2(2^n/k)\quad \forall k \leq 2^{n-1},$$
so when $|A| \leq 2^{n-1}$, condition (\ref{eq:constant-factor}) is equivalent to saying that the edge-boundary of $A$ is within a constant factor of the minimum. Regarding this case, Kahn and Kalai \cite{kahnkalaiconjecture} make the following conjecture.

\begin{conjecture}[Kahn, Kalai]
\label{conj:kk}
For any \(L > 0\), there exist \(L' > 0\) and \(\delta >0\) such that the following holds. If \(A \subset \{0,1\}^{n}\) is monotone increasing, with measure $p = \frac{|A|}{2^{n}} \leq 1/2$, and with edge-boundary satisfying
\[|\partial A| \leq L |A| \log_{2}(2^{n}/|A|),\]
then there exists a subcube \(C \subset \{0,1\}^n\) with codimension at most \(L' \log_{2}(1/p)\) and all fixed coordinates equal to 1, such that
 $$\frac{|A \cap C|}{|C|} \geq (1+\delta)p.$$
\end{conjecture}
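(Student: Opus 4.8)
The plan is to argue by contradiction, aiming to choose $L'$ large and $\delta$ small (both depending only on $L$) so that the hypotheses become incompatible with edge-isoperimetry. Suppose $A$ is monotone increasing with $\mu(A)=p \leq 1/2$ and $|\partial A| \leq L|A|\log_{2}(2^{n}/|A|)$, but that for \emph{every} subcube $C$ of codimension at most $m := L'\log_{2}(1/p)$ with all fixed coordinates equal to $1$ we have $|A\cap C|/|C| < (1+\delta)p$. As a first reduction, for monotone $A$ one has $A_i^{-}\subseteq A_i^{+}$, hence $\beta_i = \mu(A\mid x_i=1)-\mu(A\mid x_i=0)$ and $\mu(A\mid x_i=1)=p+\beta_i/2$; so if some coordinate has $\beta_i \geq 2\delta p$, the codimension-$1$ subcube $\{x_i=1\}$ already witnesses a $(1+\delta)$ density boost, a contradiction. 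We may therefore assume that \emph{all} influences are small, $\beta_i < 2\delta p$ for every $i$ — i.e. we are in the KKL/tribes regime, which is exactly the case one has to defeat.

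The heart of the argument should be an iterated density increment. Fix coordinates to $1$ one at a time, at each step choosing a coordinate of largest current influence; after $k$ steps we have a monotone set $A^{(k)}$ on $n-k$ coordinates of measure $p_k$, with $p_{k}\geq p_{k-1}$ and, by assumption, $p_k < (1+\delta)p$ for all $k \leq m$. Two invariants must be propagated. First, the boundary hypothesis must survive in a usable form: from $|\partial A| = |\partial A_i^{+}| + |\partial A_i^{-}| + \beta_i 2^{n-1}$, each conditioning step either makes genuine isoperimetric progress or "spends" an amount $\beta_i 2^{n-1}$ of the budget, so one tracks a potential like $|\partial A^{(k)}| - L\cdot|A^{(k)}|\log_{2}(2^{n-k}/|A^{(k)}|)$ and shows it does not run out too fast. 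Second — and this is where one needs real input — a monotone set all of whose influences are small and none of whose bounded-codimension all-ones sections is appreciably denser than itself must have edge-boundary substantially \emph{larger} than $L|A|\log_{2}(1/p)$; this is a quantitative, "stability" strengthening of the monotone edge-isoperimetric inequality (\ref{eq:edgeiso}), in the spirit of Theorem \ref{thm:talagrand}. Summing the per-step influence expenditures over $k = 0,1,\dots,m-1$ should then overshoot the budget $2Lp\log_{2}(1/p)$ once $L' = L'(L)$ is taken large enough, giving the contradiction.

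A parallel route worth pursuing is the $p$-biased viewpoint. By Russo's lemma $\mu_q(A) = p + \int_{1/2}^{q}\mathbf{I}_t(A)\,dt$, and writing the $q$-biased measure as a mixture (each coordinate independently forced to $1$ with probability $2q-1$) gives $\mu_q(A) = \mathbb{E}_J\,[\mu(A\mid x_J = 1^{J})]$. If no small set $J$ produced a density boost one would want $\mu_q(A)$ nonetheless to grow appreciably as $q$ rises a little above $1/2$, forcing $\mathbf{I}_t(A)$ to be large on average over a short interval $[1/2,\,1/2+\Theta(1/\log_{2}(1/p))]$, and then one transfers this to a lower bound on $\mathbf{I}_{1/2}(A)=|\partial A|/2^{n-1}$ through a biased KKL/Talagrand inequality. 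The delicate point is that the averaging must be confined to \emph{small} $J$, i.e. to $q$ extremely close to $1/2$, where the Russo increment is correspondingly tiny; reconciling "small $J$" with "appreciable density boost" is precisely the content of the conjecture.

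The main obstacle, in either approach, is the lack of a sufficiently robust structural form of the monotone edge-isoperimetric inequality: what is needed is that a monotone set with edge-boundary within a constant factor of $|A|\log_{2}(1/p)$ is forced to possess a small "spine" of nearly-always-$1$ coordinates, and the extremal information presently available (subcubes are optimal; tribes-type sets are the near-optimal obstructions at larger boundary) is not quantitatively strong enough to feed into the induction. Controlling how the isoperimetric slack and the measure interact over $\Theta(\log_{2}(1/p))$ conditioning steps — so that the effective constant $L$ does not degrade geometrically and a tribes-type obstruction cannot re-form on the conditioned set — is the step I expect to be genuinely hard, and is presumably why the conjecture is still open.
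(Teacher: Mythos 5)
There is no proof to compare against: the statement you were asked about is Conjecture~\ref{conj:kk}, an open conjecture of Kahn and Kalai which the paper merely records (and then extends to non-monotone sets); the paper offers no argument for it, and your proposal does not constitute one either. The only step you actually complete is the easy reduction: for monotone $A$ one has $A_i^{-}\subseteq A_i^{+}$, so $\mu(A\mid x_i=1)=p+\beta_i/2$, and a single coordinate of influence at least $2\delta p$ already yields the desired codimension-$1$ subcube. Everything after that is a plan resting on an unproved lemma, and you say so yourself.

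The gap is exactly where you locate it, and it is worth being precise about why it cannot be patched with the tools in this paper. The assertion you need --- that a monotone set with all influences below $2\delta p$, none of whose bounded-codimension all-ones sections is appreciably denser than $A$, must have $|\partial A| > L|A|\log_2(2^n/|A|)$ --- is not a strengthening of anything currently known; it is essentially a reformulation of the conjecture itself. Talagrand's inequality (Theorem~\ref{thm:talagrand}) gives a lower bound on total influence from ``all influences small'' alone, but that bound is only of order $p\log_2(1/\delta)$ and does not scale with $\log_2(1/p)$; the monotone tribes construction shows that ``all influences small'' is compatible with $|\partial A|\leq(1+o(1))|A|\log_2(2^n/|A|)$, so the second hypothesis (no density boost on small subcubes) must do real work, and no quantitative statement extracting that work exists. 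Your iterated-conditioning scheme also faces the budget problem you mention: over $\Theta(\log_2(1/p))$ steps the isoperimetric slack can degrade geometrically, which is precisely the regime the paper's own induction (Lemma~\ref{lemma:case1mustoccur}, Theorem~\ref{thm:stability}) avoids by working only with $\epsilon$ an absolute constant rather than $\epsilon$ of order $L$. The Russo/$\mu_q$ route founders on the point you flag: the mixture $\mu_q(A)=\mathbb{E}_J\,\mu(A\mid x_J=1^J)$ is supported on sets $J$ of size about $(2q-1)n$, far larger than $L'\log_2(1/p)$ unless $q-1/2=O(\log_2(1/p)/n)$, where Russo's lemma yields essentially nothing. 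So the proposal is an honest research plan, not a proof, and the statement remains open.
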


We believe Conjecture \ref{conj:kk} to be true for non-monotone sets as well, if one allows the subcube \(C\) to have fixed \(0\)'s as well as fixed \(1\)'s:

\begin{conjecture}
For any \(L > 0\), there exist \(L' > 0\) and \(\delta >0\) such that the following holds. If \(A \subset \{0,1\}^{n}\) has measure $p = \frac{|A|}{2^{n}} \leq 1/2$ and has edge-boundary satisfying
\[|\partial A| \leq L |A| \log_{2}(2^{n}/|A|),\]
then there exists a subcube \(C \subset \{0,1\}^n\) with codimension at most \(L' \log_{2}(1/p)\), such that
\[\frac{|A \cap C|}{|C|} \geq (1+\delta)p.\]
\end{conjecture}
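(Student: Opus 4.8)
The plan is to prove the statement by induction, using Theorem~\ref{thm:moreprecisestability} as the base case and the edge‑isoperimetric inequality~(\ref{eq:edgeiso}) applied to the fibres over a well‑chosen set of coordinates as the inductive engine. (Note that one cannot simply reduce to the monotone Kahn--Kalai conjecture by compressions: monotone compression can decrease the distance from $A$ to the nearest subcube, so ``$A$ far from every subcube'' is not preserved.) First dispose of the range where $p$ is bounded away from $0$, say $p\geq c_1$ for a suitable absolute constant: there $\log_2(1/p)=O(1)$ and the total influence $|\partial A|/2^{n-1}=2Lp\log_2(1/p)$ is bounded, so Friedgut's Junta Theorem puts $A$ within $\eta 2^n$ of a junta $B$ on $O_L(1)$ coordinates; choosing $\eta$ a sufficiently small constant, any one of the boundedly many bounded‑codimension subcubes contained in $B$ inherits $A$‑density at least $1-o(1)\geq(1+\delta)p$. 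From now on assume $p$ is small.

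Fix a set $S\subseteq[n]$ of $k$ coordinates, with $k$ a constant to be chosen, and decompose $A$ into its fibres $A_\sigma\subseteq\{0,1\}^{[n]\setminus S}$ ($\sigma\in\{0,1\}^S$), of densities $q_\sigma$, so $\sum_\sigma q_\sigma = 2^k p$. Applying~(\ref{eq:edgeiso}) inside each fibre gives $\sum_\sigma|\partial A_\sigma|\geq 2^{n-k}\sum_\sigma q_\sigma\log_2(1/q_\sigma)$. If some fibre has $q_\sigma\geq(1+\delta)p$ we are done (as $k\leq L'\log_2(1/p)$), so assume not; then $\log_2(1/q_\sigma)\geq\log_2(1/p)-O(\delta)$ whenever $q_\sigma\geq p/2$, and, since the $q_\sigma$ average $p$ and are capped at $(1+\delta)p$, the fibres with $q_\sigma<p/2$ carry only an $O(\delta)$ fraction of the mass $\sum_\sigma q_\sigma$. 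Hence $\sum_\sigma|\partial A_\sigma|\geq(1-O(\delta))|A|\log_2(1/p)$, where we have used $p\leq 1/2$ so that $\log_2(1/p)\geq1$. Subtracting this from $|\partial A|\leq L|A|\log_2(1/p)$ bounds the across‑fibre edges: $\sum_{\sigma\sim\sigma'}|A_\sigma\Delta A_{\sigma'}|\leq (L-1+O(\delta))|A|\log_2(1/p)$.

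This yields a dichotomy. If the across‑fibre edges number at least $\tfrac{1}{2}(L-1)|A|\log_2(1/p)$, then the within‑fibre edges number at most $\tfrac{1}{2}(L+1)(1+O(\delta))|A|\log_2(1/p)$, and averaging over the fibres with $q_\sigma\geq p/2$ (which carry all but an $O(\delta)$ fraction of $\sum_\sigma q_\sigma\log_2(1/q_\sigma)$) produces a fibre $A_{\sigma^\ast}$ with $|\partial A_{\sigma^\ast}|\leq L''|A_{\sigma^\ast}|\log_2(1/q_{\sigma^\ast})$ for some $L''=\tfrac12(L+1)(1+O(\delta))<L$; passing to $A_{\sigma^\ast}$ spends $k=O(1)$ coordinates, shrinks $p$ by at most a constant factor, and strictly decreases the effective value of $L$, so after a bounded number of iterations $L$ is pushed below $1+c$ and Theorem~\ref{thm:moreprecisestability} supplies a subcube of codimension $\log_2(1/q)+O(1)$ on which the current set has density $\geq 1-O(c)\geq(1+\delta)p$; unwinding the fibres turns this into the subcube required for $A$. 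If instead the across‑fibre edges number fewer than $\tfrac12(L-1)|A|\log_2(1/p)$, then $A$ is close to the cylinder $A_{\sigma_0}\times\{0,1\}^S$ for a typical $\sigma_0$, and we recurse on $A_{\sigma_0}$ inside $\{0,1\}^{[n]\setminus S}$ with the same measure $p$ and essentially the same $L$; since the measure is pinned, the ambient dimension can only shrink down to about $\log_2(1/p)$, at which point $A$ has become (close to) a single point — a subcube of codimension $\approx\log_2(1/p)$ — and we are done.

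The main obstacle is twofold. First, the approximation error incurred in the cylinder branch compounds along the recursion, and controlling it — showing that the subcube produced at the bottom genuinely has $A$‑density close to $(1+\delta)p$ rather than being swamped by accumulated error — requires a quantitatively robust form of the implication ``small across‑fibre boundary $\Rightarrow$ close to a cylinder'', which does not follow from soft arguments. Second, and more seriously, this statement strengthens the monotone Kahn--Kalai conjecture (Conjecture~\ref{conj:kk}), which is open; in the regime $L$ large and $p\to0$ the dichotomy above need not close, essentially because Talagrand's inequality (Theorem~\ref{thm:talagrand}) only forces $\max_i\beta_i\geq p^{O(L)}$, far short of the $\beta_i\gtrsim p$ that a single‑step density increment would demand. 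A complete proof is therefore likely to need a genuinely sharper ``robust edge‑isoperimetric inequality'', describing every $A$ with $|\partial A|\leq L|A|\log_2(2^n/|A|)$ as an approximate union of at most $(2^n/|A|)^{O(L)}$ subcubes of the correct total codimension; the argument above should then be a routine consequence.
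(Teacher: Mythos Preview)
This statement is presented in the paper as an open conjecture; the paper offers no proof, and indeed the preceding Conjecture~\ref{conj:kk} (the monotone case) is already a well-known open problem of Kahn and Kalai. You correctly acknowledge this in your final paragraph, so your proposal is really a sketch of an approach together with an honest account of why it does not close. That is fine as far as it goes, but there is no paper proof to compare against.

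That said, one of the gaps in your sketch is more concrete than you make it sound, and is worth naming precisely. Your first branch aims to drive the multiplicative constant $L$ down below $1+c$ and then invoke Theorem~\ref{thm:moreprecisestability} as a base case. But Theorem~\ref{thm:moreprecisestability} has as its hypothesis that the \emph{additive} excess $\epsilon = |\partial A|/|A| - \log_2(2^n/|A|)$ is at most an absolute constant, not that the \emph{multiplicative} factor $L$ is close to $1$. Having $L\leq 1+c$ only gives $\epsilon \leq c\log_2(1/p)$, which is unbounded as $p\to 0$. The paper's introduction stresses exactly this distinction: Samorodnitsky's variant of the tribes construction satisfies $|\partial A|\leq(1+\epsilon)|A|\log_2(2^n/|A|)$ for $\epsilon$ arbitrarily small, yet $A$ is $(1-o(1))|A|$-far from every subcube. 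So Theorem~\ref{thm:moreprecisestability} is simply not available as the terminal step of your $L$-decreasing branch, and the branch has no base case.

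Your cylinder branch has a separate unjustified step: ``few across-fibre edges'' gives $\sum_{\sigma\sim\sigma'}|A_\sigma\Delta A_{\sigma'}|$ small, which says adjacent fibres are similar, not that $A$ is close to $A_{\sigma_0}\times\{0,1\}^S$ for a \emph{single} $\sigma_0$. Even granting that, recursing on $A_{\sigma_0}$ with the same $p$ and the same $L$ reduces the ambient dimension by $k=O(1)$ per step, so reaching dimension $\approx\log_2(1/p)$ takes $\Theta(n)$ steps, and the error---of order $|A|\log_2(1/p)$ per step---swamps $|A|$ long before you arrive. You flag the compounding-error issue yourself, but the lack of any mechanism to terminate this branch is the structural reason the dichotomy fails to close, quite apart from Talagrand's inequality being too weak.
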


{\em Acknowledgements:} The author would like to thank Alex Samorodnitsky for much helpful advice, and also Ehud Friedgut, Imre Leader, and Benny Sudakov for valuable discussions.


\begin{thebibliography}{99}
\bibitem{benor} M. Ben-Or, N. Linial, Collective coin flipping, robust voting games, and minima of Banzhaf value, in {\em Proc. 26th IEEE Synopsium on the Foundations of Computer Science,}, pp. 408-416.
\bibitem{bernstein} A. J. Bernstein, Maximally connected arrays on the \textit{n}-cube, {\em SIAM Journal on Applied Mathematics} 15 (1967), pp. 1485-1489. 
\bibitem{falik} D. Falik, A. Samorodnitsky, Edge-Isoperimetric Inequalities and Influences, {\em Combinatorics, Probability and Computing}, 16 (2007), pp. 693-712.
\bibitem{friedgutjuntatheorem} E. Friedgut, Boolean functions with low average sensitivity depend on few coordinates, {\em Combinatorica} Volume 18, Issue 1 (1998), pp. 27-36.
\bibitem{fkn} E. Friedgut, G. Kalai, A. Naor, Boolean Functions whose Fourier Transform is Concentrated on the First Two Levels, \emph{Advances in Applied Mathematics} Volume 29, Issue 3 (2002), pp. 427-437.
\bibitem{harper} L. H. Harper, Optimal assignments of numbers to vertices, {\em SIAM Journal on Applied Mathematics} 12 (1964) pp. 131-135.
\bibitem{hart} S. Hart, A note on the edges of the \(n\)-cube, \emph{Discrete Mathematics} 14 (1976), pp. 157-163.
\bibitem{kahnkalaiconjecture} J. Kahn, G. Kalai, Thresholds and Expectation Thresholds, {\em Combinatorics, Probability and Computing} Volume 16, Issue 3  (2007), pp. 495-502.
\bibitem{tribeskkl} J. Kahn, G. Kalai, N. Linial, The influence of variables on boolean functions, in {\em FOCS} 1988, pp. 68-80.
\bibitem{keevash} P. Keevash, Shadows and intersections: stability and new proofs, {\em Advances in Mathematics} 218 (2008), pp. 1685--1703.
\bibitem{leader} I. Leader, personal communication.
\bibitem{lindsey} J. H. Lindsey, II, Assignment of numbers to vertices, {\em American Mathematical Monthly} 71 (1964) 508-516.
\bibitem{rossignol} R. Rossignol, Threshold for monotone symmetric properties through a logarithmic Sobolev inequality, {\em Annals of Probability}, Volume 34 (2005), pp. 1707-1725.
\bibitem{alextalk} A. Samorodnitsky, Edge Isoperimetric Inequalities in the Hamming Cube (talk given at the IPAM Long Program in Combinatorics, Workshop IV: Analytical Methods in Combinatorics, Additive Number Theory and Computer Science, November 2009).
\bibitem{alex} A. Samorodnitsky, personal communication.
\bibitem{talagrand} M. Talagrand, On Russo's approximate 0-1 law, \emph{The Annals of Probability} Volume 22, Issue 3 (1994) pp. 1576-1587.
\end{thebibliography}
\end{document}